\def \mx#1{\mbox{\boldmath ${\bf #1}$}}
\newcommand{\tr}{\mathop{\mathrm{tr}}\nolimits}
\newcommand{\belle}{beautiful}
\newcommand{\gf}{\mathop{\mathrm{GF}}\nolimits}
\newtheorem{pr}{Proposition}
\newenvironment{defn}{\definition\normalfont}{\enddefinition}
\newtheorem{thm}{Theorem}
\newtheorem{lm}{Lemma}
\newenvironment{cons}{\construction\normalfont}{\endconstruction}
\newtheorem{example}{Example}
\newenvironment{eg}{\begin{example}\normalfont}{\end{example}}
\begin{document}

\begin{frontmatter}

\title{On optimality and construction of circular repeated-measurements designs}
\runtitle{Circular weakly balanced repeated-measurements}

\begin{aug}
  \author{\fnms{R. A.}  \snm{Bailey}\thanksref{m1,m2}\ead[label=e1]{rab24@st-andrews.ac.uk}},
  \author{\fnms{Peter J.} \snm{Cameron}\thanksref{m1,m2}\ead[label=e2]{pjc20@st-andrews.ac.uk}},
  \author{\fnms{Katarzyna} \snm{Filipiak}\thanksref{m3}\corref{}\ead[label=e3]{kasfil@up.poznan.pl}},
  \author{\fnms{Joachim} \snm{Kunert}\thanksref{m4}\ead[label=e4]{kunert@statistik.tu-dortmund.de}}
  \and
  \author{\fnms{Augustyn}  \snm{Markiewicz}\thanksref{m3}\ead[label=e5]{amark@up.poznan.pl}}%


  \runauthor{R. A. Bailey et al.}

  \affiliation{\thanksmark[1]{m1}School of Mathematics and Statistics, University of St Andrews, UK\\
  \thanksmark[2]{m2}School of Mathematical Sciences, Queen Mary, University of London, UK\\
  \thanksmark[3]{m3}Department of Mathematical and Statistical Methods, Pozna\'n University of Life Sciences, Poland\\ 
  \thanksmark[4]{m4}Department of Statistics, TU Dortmund University, Germany}

\end{aug}



\begin{abstract}
The aim of this paper is to characterize and construct universally optimal designs among the class of circular
repeated-measurements designs when the parameters do not permit balance for carry-over effects.
It is shown that some circular weakly neighbour balanced designs defined by \cite{FM12} 
are universally optimal repeated-measurements designs. 
These results extend the work of \cite{Magda}, \cite{Kunert84b} and \cite{FM12}. 
\end{abstract}

\begin{keyword}[class=MSC]
\kwd[Primary ]{62K05}
\kwd[; secondary ]{62K10}
\end{keyword}

\begin{keyword}
\kwd{circular weakly balanced design}
\kwd{repeated-measurements design}
\kwd{uniform design}
\kwd{universal optimality}
\end{keyword}

\end{frontmatter}

\section{Introduction}

The problem of universal optimality of repeated-\linebreak
measurements designs is widely studied in the literature.
Most of the designs considered have the same number of periods as treatments;
we also make this assumption.

For experiments without a pre-period, \cite{Hedayat} and  \cite{Cheng}
proved the universal optimality, for the estimation of direct as well as carry-over effects, 
of some balanced uniform repeated-measurements designs over a restricted class of competing designs.
If the number of subjects is at most twice the number of treatments,
\cite{Kunert84a} showed that, for the estimation of direct effects,
balanced uniform designs are universally optimal over the class of all designs.
He also proved that if the number of experimental subjects is sufficiently large then 
a balanced uniform design is no longer optimal. Moreover, this design is not universally optimal 
for the estimation of carry-over effects when certain other special designs exist. 
\cite{Stufken} constructed some universally optimal designs using orthogonal arrays of type I.
\cite{Jones} proved universal optimality of some balanced uniform designs under the
model with random carry-over effects.

\cite{Kunert83} considered the model for repeated-measurements designs with or without 
a pre-period. He proved the universal optimality of some special generalized latin squares and generalized Youden
designs over particular classes of designs. 

A repeated-measurements design is called \emph{circular} if there is a pre-period and, for each subject,
the treatment on the pre-period is the same as the treatment on the last period.
\cite{Magda} proved the universal optimality of circular strongly balanced uniform designs and
circular balanced uniform designs over appropriate subclasses of possible designs.
\cite{Kunert84b} strengthened the results of \cite{Magda} by showing the universal optimality of 
circular balanced designs over the set of all designs.

Universal optimality of some balanced circular designs is also studied assuming a model
of repeated measurements designs in which period effects are not significant. This simpler model,
in which carry-over effects play the role of left-neighbour effects,
is known in the literature as an interference model.
\cite{Druilhet} considered optimality of circular balanced designs (CBDs) for the estimation of 
direct as well as carry-over effects, while 
\cite{BaileyDruilhet} proved their optimality for the estimation of total effects.
\cite{FM12} showed universal optimality of circular weakly balanced
designs (CWBDs) for the estimation of direct effects only.

In this paper we consider circular repeated-measurements designs under the full model and under 
two simpler models. We show universal optimality, for the estimation of direct as well as carry-over effects, of
circular weakly balanced designs, CWBDs, and we give methods of constructing some of them.
For particular parameter sets, there exists a CWBD using fewer subjects than the circular balanced uniform designs 
whose universal optimality was proved by \cite{Magda} and \cite{Kunert84b}. The idea of 
the possible reduction of number of subjects follows from the universal optimality of circular weakly neighbour balanced 
designs under the interference model with left-neighbour effects, which is proved in \cite{FM12}.

\section{Models and designs}

Let $\mathcal D_{t,n,t}$ be the set of circular designs with $t$ treatments, $n$ experimental subjects and 
$t$ periods, each subject being given one treatment during each period. By $d(i,j)$, for
$1\leq i\leq t$ and $1\leq j\leq n$, we denote the treatment assigned to the 
$j$th subject in the $i$th period. \cite{Magda} proposed the following model 
associated with the design $d$ in $\mathcal D_{t,n,t}$:
\begin{equation}
\label{model_elements}
y_{dij}=\alpha_i+\beta_j+\tau_{d(i,j)} +\rho_{d(i-1,j)} +\varepsilon_{ij} , \quad
1\leq i\leq t, 
\; 1\leq j\leq n,
\end{equation}
where $y_{dij}$ is the response of the $j$th subject in the $i$th period,  and $\alpha _i$, $\beta _j$,
$\tau _{d(i,j)}$, $\rho _{d(i-1,j)}$ are, respectively, the $i$th period effect, the $j$th subject effect,
the direct effect of treatment $d(i,j)$ and the carry-over effect of treatment $d(i-1,j)$,  
where $d(0,j)=d(t,j)$. The $\varepsilon_{ij}$ are uncorrelated random variables with common variance and zero mean.

In vector notation model (\ref{model_elements}) can be rewritten as
\begin{equation}
\label{model}
\mathbf{y}=\mathbf{P}\boldsymbol{\alpha}+\mathbf{U}{\boldsymbol\beta}+\mathbf{T}_d\boldsymbol{\tau} +
\mathbf{F}_d\boldsymbol{\rho} +\boldsymbol{\varepsilon }.
\end{equation}
Here $\mathbf{y}$ is the transpose of the vector $\mathbf{y}'=(y_{d11}, y_{d21},\dots, y_{dtn})$.
Also, $\boldsymbol{\alpha}$, $\boldsymbol{\beta}$, $\boldsymbol{\tau}$ and $\boldsymbol{\rho}$ are the vectors of period, experimental subject, 
direct and carry-over effects respectively. Moreover, $\boldsymbol{\varepsilon}$ is the vector of random errors, 
with $\boldsymbol{\varepsilon}\sim N(\mathbf{0}_{nt},\sigma^2\mathbf{I}_{nt})$, where $\sigma^2$ is a positive constant,
$\mathbf{I}_n$ denotes the identity matrix of order $n$, and $\mathbf{0}_n$ is the $n$-dimensional vector of zeros.
The matrices $\mathbf T_d$ and $\mathbf F_d$ are the design matrices for  direct and carry-over effects respectively,
while $\mathbf{P} =  \mathbf{1}_n \otimes \mathbf{I}_t$ and $\mathbf{U} = \mathbf{I}_n \otimes \mathbf{1}_t$ are
the incidence matrices for period and experimental subject effects respectively, 
where $\mathbf{1}_n$ is the $n$-dimensional vector of ones and $\otimes $ denotes the Kronecker product.
Let $\mathbf{H}_t=(h_{ij})$ be the circulant matrix of order $t$
with $h_{ij}=1$ if $j-i=1$ or $i=1, j=t$, and $h_{ij}=0$ otherwise.
Then $\mathbf{F}_d = (\mathbf{I}_n\otimes \mathbf{H}_t)\mathbf{T}_d$.

\medskip
In this paper we also consider the following simpler models -- model (\ref{model}) without period effects:
\begin{equation}
\label{m_block}
\mathbf{y}=\mathbf{U}\boldsymbol{\beta}+\mathbf{T}_d\boldsymbol{\tau} +\mathbf{F}_d\boldsymbol{\rho} +\boldsymbol{\varepsilon },
\end{equation}
and model (\ref{model}) without experimental subject effects:
\begin{equation}
\label{m_period}
\mathbf{y}=\mathbf{P}\boldsymbol{\alpha}+\mathbf{T}_d\boldsymbol{\tau} +\mathbf{F}_d\boldsymbol{\rho} +\boldsymbol{\varepsilon }.
\end{equation}
Model (\ref{m_block}) is also known in the literature as the interference model with left-neighbour effects,
where left-neighbour effects play the role of carry-over effects; 
cf.~\cite{Druilhet}, \cite{FM12}.

\medskip
Following \cite{Magda}, we give the following definitions. A design 
$d$ in $\mathcal D_{t,n,t}$ is called:
\begin{itemize}
\item[(i)] {\it uniform on the periods} if each treatment occurs the same number of times in each period;
\item[(ii)] {\it uniform on the subjects} if each treatment is assigned exactly once to each subject;
\item[(iii)] {\it uniform} if it is uniform on both periods and subjects;
\item[(iv)] {\it circular strongly balanced} (CSBD) if the collection of ordered pairs
$(d(i,j),d(i+1,j))$,  for $0\leq i\leq t-1$ and $1\leq j\leq n$, contains each ordered pair
of treatments (distinct or not) $\lambda_0$ times, where $\lambda_0 = n/t$;
\item[(v)] {\it circular balanced} (CBD) if the collection of ordered pairs
$(d(i,j),d(i+1,j))$,  for $0\leq i\leq t-1$ and $1\leq j\leq n$, contains each ordered pair
of distinct treatments $\lambda_1$ times, where $\lambda_1 = n/(t-1)$, and does not contain
any pair of equal treatments.
\end{itemize}

We additionally define  circular weakly balanced designs.
Let $\mathbf{S}_d=\mathbf{T}'_d\mathbf{F}_d=(s_{d,ij})_{1\leq i,j\leq t}$. The entry 
$s_{d,ij}$ is the number of appearances of treatment $i$ preceded by treatment $j$ in the design $d$. 
Thus the rows and columns of $\mathbf{S}_d$ sum to the vector of treatment replications.
\cite{Filipiaketal} called the matrix $\mathbf{S}_d$ the \textit{left-neighbouring matrix}.
When the number of treatments is equal to the number of periods,
\cite{FM12} called a design $d$ in $\mathcal D_{t,n,t}$
\begin{itemize}
\item[(vi)] {\it circular weakly balanced} (CWBD) if the collection of ordered pairs
$(d(i,j),d(i+1,j))$,  for 
$0\leq i\leq t-1$ and $ 1\leq j\leq n$, contains each ordered pair
of distinct treatments $\lambda$ or $\lambda-1$ times, where $\lambda = \lceil n/(t-1) \rceil$, 
in such a way that:
\begin{itemize} 
\item[(a)] $\mathbf{S}_d\mathbf{1}_t=\mathbf{S}'_d\mathbf{1}_t=n\mathbf{1}_t$, so that each treatment has replication~$n$;
\item[(b)] $\mathbf{S}_d\mathbf{S}'_d$ is completely symmetric, i.e., all diagonal entries are equal and all
off-diagonal entries are equal.
\end{itemize}
\end{itemize}
In the above definition $\lceil x\rceil$ is the smallest integer greater than or equal~to~$x$.

Note that \cite{Wilkinson} defined partially neighbour balanced designs
as designs with $s_{d,ij}\in\left\{0, \:1\right\}$ if $i\not=j$;
however, their designs are not circular, and they consider neighbours 
in more than one direction.
Some methods of constructing partially neighbour-balanced circular designs are given by
\cite{Azais}.

If $d$ is a CSBD then $\mathbf{S}_d = \lambda_0\mathbf{J}_t$, where $\mathbf{J}_t = \mathbf{1}_t\mathbf{1}_t'$;
if $d$ is a CBD then $\mathbf{S}_d = \lambda_1(\mathbf{J}_t - \mathbf{I}_t)$. 
If $d$ is a CWBD but not a CBD then $\mathbf{S}_d$ is not completely symmetric but
$\mathbf{S}_d\mathbf{S}_d'$ is.


\section{Existence conditions}
\label{sec:exist}

A necessary condition for the existence of a CBD with $t$ periods is that $(t-1)$ divides $n$: see e.g.~\cite{Druilhet},
while for the existence of a CWBD the expression $n(n-2\lambda+1)$ must be divisible by $t-1$; cf.~\cite{FM12}.
The design parameters satisfying the necessary condition for the existence of a CWBD with $t\leq 19$ and $n<3(t-1)$ are listed in 
Table~1 of \cite{FM12}.

Suppose that $d$~is a CWBD in $\mathcal{D}_{t,n,t}$ which is not a CBD.
Then $\lambda = \lceil n/(t-1)\rceil$. Put 
\begin{equation}
k=n-(\lambda-1)(t-1)
\label{eq:kdef}
\end{equation}   
in this section and Section~\ref{constr}.
Since $d$~is not a CBD, $1\leq k\leq t-2$.  Using this notation, the necessary condition for a CWBD given by \cite{FM12}  is
\begin{equation}
t-1 \quad\mbox{divides} \quad k(k-2\lambda+1).
\label{eq:fmcond}
\end{equation}

If $n=1$ and every treatment occurs once in~$d$ then $\mathbf{S}_d$ is a permutation matrix with zero diagonal and so $\mathbf{S}_d\mathbf{S}_d' = \mathbf{I}_t$.  Hence $d$~is a CWBD.
However, the design is disconnected in the sense that direct effects of treatments are completely confounded with carry-over effects, and so neither can be estimated.  
\cite{FR09} showed that a design with both direct and left-neighbour effects of treatments cannot be connected if $n=1$ or if $t$ is even and $n=2$.
If $d$~is disconnected then it cannot be considered to be universally optimal; 
in fact, the proof of Theorem 3.1 of \cite{FM12} breaks down in this case.  From now on, we assume that $d$~is connected: in  particular, $n>1$.

Let $\mathbf{A}_d = \mathbf{S}'_d - (\lambda-1)(\mathbf{J}_t - \mathbf{I}_t)$.  
Then $\mathbf{A}_d$ is a $t \times t$ matrix whose diagonal entries are all zero
and whose other entries are all in $\{0,1\}$.
Moreover, each row and column of $\mathbf{A}_d$ has $k$~non-zero entries.  Hence
$\mathbf{A}_d \mathbf{J}_t = \mathbf{J}_t \mathbf{A}_d = k\mathbf{J}_t$. Therefore
\begin{eqnarray*}
\mathbf{S}_d \mathbf{S}_d' & = &
\left[(\lambda-1)(\mathbf{J}_t - \mathbf{I}_t) + \mathbf{A}'_d \right]
\left[(\lambda-1)(\mathbf{J}_t - \mathbf{I}_t) + \mathbf{A}_d \right]\\
& = & (\lambda -1)^2\left[ (t-2)\mathbf{J}_t + \mathbf{I}_t\right] 
+2(\lambda -1)k \mathbf{J}_t + \mathbf{A}_d' \mathbf{A}_d
-(\lambda-1)(\mathbf{A}_d + \mathbf{A}_d').
\end{eqnarray*}
Thus $\mathbf{S}_d \mathbf{S}_d'$ is completely symmetric if and only if
\begin{equation}
\mathbf{A}_d' \mathbf{A}_d - (\lambda -1)(\mathbf{A}_d + \mathbf{A}_d') 
\quad\mbox{is completely symmetric}.
\label{eq:AAstuff}
\end{equation}

We shall say that design~$d$ has
\begin{description}
\item[Type I]
if $\mathbf{A}_d + \mathbf{A}_d'$ is completely symmetric;
\item[Type II]
if $\mathbf{A}_d + \mathbf{A}_d'$ is not completely symmetric and $\lambda=1$;
\item[Type III]
if $\mathbf{A}_d + \mathbf{A}_d'$ is not completely symmetric and $\lambda>1$.
\end{description}
If $d$~has Type~I or~II then $\mathbf{A}_d'\mathbf{A}_d$ is completely symmetric.
The off-diagonal entries in each row of $\mathbf{A}_d' \mathbf{A}_d$ sum to $k(k-1)$, so in this case $k(k-1)$ is divisible by $t-1$.
If $d$~has Type~I then $k=(t-1)/2$ and 
$\mathbf{A}_d + \mathbf{A}_d' = \mathbf{J}_t - \mathbf{I}_t$.  Then $t-1$ divides
$(t-1)(t-3)/4$, and so $t \equiv 3 \bmod 4$.

\begin{thm}
\label{addCBD}
Suppose that $d$ is a CWBD in $\mathcal{D}_{t,n,t}$ and $d'$ is a 
CBD in $\mathcal{D}_{t,m,t}$, for some values of $n$ and~$m$.  
Then the design~$d''$ in $\mathcal{D}_{t,n+m,t}$ which juxtaposes $d$ and~$d'$ 
is a CWBD if and only if $d$~has Type~I.
\end{thm}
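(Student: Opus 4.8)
The plan is to track the left-neighbouring matrix through the juxtaposition and reduce the whole statement to condition~(b) of the definition of a CWBD. The first observation is that juxtaposing $d$ and~$d'$ simply concatenates their subjects, so the design matrices stack and $\mathbf{S}_{d''}=\mathbf{T}_{d''}'\mathbf{F}_{d''}=\mathbf{S}_d+\mathbf{S}_{d'}$. Since $d'$~is a CBD we have $\mathbf{S}_{d'}=\lambda_1(\mathbf{J}_t-\mathbf{I}_t)$ with $\lambda_1=m/(t-1)$, and writing $\mathbf{S}_d=(\lambda-1)(\mathbf{J}_t-\mathbf{I}_t)+\mathbf{A}_d'$ from the definition of $\mathbf{A}_d$ gives
\[
\mathbf{S}_{d''}=(\lambda-1+\lambda_1)(\mathbf{J}_t-\mathbf{I}_t)+\mathbf{A}_d'.
\]
Thus, on setting $\lambda''=\lambda+\lambda_1$, the design~$d''$ has exactly the same associated matrix $\mathbf{A}_{d''}=\mathbf{A}_d$ as~$d$, only with the multiplier $\lambda-1$ replaced by $\lambda''-1$.

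Next I would dispose of the parts of the CWBD definition that cost nothing. Row and column sums of $\mathbf{S}_{d''}$ add, so every treatment has replication $n+m$ and~(a) holds. The off-diagonal entries of $\mathbf{A}_d$ lie in $\{0,1\}$, so those of $\mathbf{S}_{d''}$ lie in $\{\lambda''-1,\lambda''\}$; and since $n=(\lambda-1)(t-1)+k$ with $1\le k\le t-2$ gives $n+m=(\lambda''-1)(t-1)+k$, we indeed have $\lambda''=\lceil (n+m)/(t-1)\rceil$, so the counting requirement of~(vi) is met. Crucially, both of these hold irrespective of the type of~$d$, so the entire statement reduces to deciding when $\mathbf{S}_{d''}\mathbf{S}_{d''}'$ is completely symmetric.

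For this I would apply the criterion~(\ref{eq:AAstuff}) to each design. Because $d$~is a CWBD, $\mathbf{A}_d'\mathbf{A}_d-(\lambda-1)(\mathbf{A}_d+\mathbf{A}_d')$ is completely symmetric; and $d''$~is a CWBD precisely when $\mathbf{A}_d'\mathbf{A}_d-(\lambda''-1)(\mathbf{A}_d+\mathbf{A}_d')$ is completely symmetric. Subtracting the first expression from the second and using that the completely symmetric matrices form a linear subspace (so sums and differences of them stay completely symmetric), $d''$~is a CWBD if and only if $(\lambda''-\lambda)(\mathbf{A}_d+\mathbf{A}_d')=\lambda_1(\mathbf{A}_d+\mathbf{A}_d')$ is completely symmetric. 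Since a CBD exists only when $(t-1)$ divides~$m$, forcing $\lambda_1\ge 1>0$, this holds if and only if $\mathbf{A}_d+\mathbf{A}_d'$ is completely symmetric, which is exactly the defining property of Type~I.

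The calculation is short; the points that need care are structural rather than computational. The main thing to verify honestly is that condition~(a) and the counting condition are automatic, so that the equivalence genuinely hinges on~(b) alone; and that the ``only if'' direction is valid, which relies on $\lambda_1\neq 0$ to cancel the scalar. The recognition that $d$ and~$d''$ share the same matrix $\mathbf{A}_d$, differing only through the scalar multiplier in~(\ref{eq:AAstuff}), is what makes the two instances of the criterion directly comparable and drives the whole argument.
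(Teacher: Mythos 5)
Your proposal is correct and follows essentially the same route as the paper's proof: both identify $\mathbf{S}_{d''}=\mathbf{S}_d+\mathbf{S}_{d'}$, observe that $\mathbf{A}_{d''}=\mathbf{A}_d$, and reduce the question to comparing condition~(\ref{eq:AAstuff}) for $d$ and for $d''$, which differ only in the scalar multiplying $\mathbf{A}_d+\mathbf{A}_d'$. Your version is in fact slightly more careful than the paper's, since you explicitly verify the replication and counting conditions and note that the cancellation in the ``only if'' direction needs $\lambda_1\neq 0$, points the paper leaves implicit.
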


\begin{proof}
If $d'$ is a CBD then $m$~is a multiple of~$t-1$ and $\mathbf{S}_{d'}$ is completely symmetric.  Hence $\mathbf{S}_{d''} = \mathbf{S}_d+\mathbf{S}_{d'}$ and so 
$\mathbf{A}_{d''} = \mathbf{A}_{d}$.
Put $\lambda' = m/(t-1)$. Condition~(\ref{eq:AAstuff}) for design~$d''$ says that
\begin{equation}
\mathbf{A}_d' \mathbf{A}_d - 
(\lambda' + \lambda -1)(\mathbf{A}_d + \mathbf{A}_d') 
\quad\mbox{is completely symmetric}.
\label{eq:AAstuffmore}
\end{equation}
If $d$~has Type~I then $\mathbf{A}_d' \mathbf{A}_d$ and $\mathbf{A}_d + \mathbf{A}_d'$ are both completely symmetric, and so condition~(\ref{eq:AAstuffmore}) is satisfied and $d''$ is a CWBD.  Conversely, if $d''$ is a CWBD then
conditions~(\ref{eq:AAstuff}) and~(\ref{eq:AAstuffmore}) are both satisfied.   
Hence $\mathbf{A}_d + \mathbf{A}_d'$ is  completely symmetric and so $d$~has Type~I.
\end{proof}

\begin{lm}
\label{lm:bound}
Suppose that $d$~is a CWBD of Type~III in $\mathcal{D}_{t,n,t}$.
\begin{itemize}
\item[(a)]
If $k = (t-1)/2$ then $\lambda \leq (k+1)/2$.
\item[(b)]
If $k < (t-1)/2$ then $\lambda \leq k$.
\item[(c)]
If $k >(t-1)/2$ then $\lambda \leq t-k$.
\end{itemize}
\end{lm}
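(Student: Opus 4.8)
The plan is to turn the complete-symmetry condition~(\ref{eq:AAstuff}) into scalar equations on the entries of $\mathbf{A}_d'\mathbf{A}_d$ and then to bound those entries combinatorially, reading $\mathbf{A}_d$ as the adjacency matrix of a loopless digraph~$D$ on $t$ vertices in which every vertex has in-degree and out-degree~$k$. Put $\mathbf{B}=\mathbf{A}_d'\mathbf{A}_d$ and $\mathbf{C}=\mathbf{A}_d+\mathbf{A}_d'$. Since $d$~is a CWBD, condition~(\ref{eq:AAstuff}) says that $\mathbf{B}-(\lambda-1)\mathbf{C}$ is completely symmetric; its diagonal entries are all~$k$, and using $\mathbf{A}_d\mathbf{1}_t=\mathbf{A}_d'\mathbf{1}_t=k\mathbf{1}_t$ its row sums are all $k^2-2(\lambda-1)k$, so every off-diagonal entry equals $\mu:=k(k-2\lambda+1)/(t-1)$. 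Writing $c_{ij}=(\mathbf{C})_{ij}\in\{0,1,2\}$ for the number of arcs joining $i$ and~$j$ in~$D$, this yields the key relation $(\mathbf{B})_{ij}=\mu+(\lambda-1)c_{ij}$ for all $i\neq j$.

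Next I would record what the entry $(\mathbf{B})_{ij}$ means: it counts the common in-neighbours of $i$ and~$j$ in~$D$. If $c_{ij}\geq1$ there is an arc between $i$ and~$j$, whose tail lies in exactly one of the two in-neighbourhoods and hence is not a common in-neighbour; since each in-neighbourhood has size~$k$ this forces $(\mathbf{B})_{ij}\leq k-1$. Counting the union of the two in-neighbourhoods gives the companion lower bound $(\mathbf{B})_{ij}\geq 2k-t+2-c_{ij}$, and trivially $(\mathbf{B})_{ij}\geq0$.

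It then remains to see which values $c_{ij}$ must occur, which I would read off from the fact that each row of~$\mathbf{C}$ has off-diagonal sum~$2k$ spread over $t-1$ entries. If $k<(t-1)/2$ the sum is too small for a row to avoid a zero, so some $c_{ij}=0$; if $k>(t-1)/2$ it is too large to avoid a two, so some $c_{ij}=2$, while the Type~III hypothesis that $\mathbf{C}$ is not completely symmetric forces a minimal entry $c_{\min}\in\{0,1\}$; and if $k=(t-1)/2$ the deviations of the off-diagonal entries from~$1$ sum to zero, so a positive deviation is matched by a negative one and both a zero and a two occur. Substituting into the key relation finishes each case. In~(a) the bound $(\mathbf{B})_{ij}\leq k-1$ at an entry $c_{ij}=2$ against $\mu\geq0$ gives $2(\lambda-1)\leq k-1$, so $\lambda\leq(k+1)/2$; in~(b) the same upper bound at a maximal (hence positive) entry against $\mu\geq0$ gives $(\lambda-1)\leq(\lambda-1)c_{ij}\leq k-1$, so $\lambda\leq k$; in~(c) subtracting the union lower bound at a minimal entry from the upper bound $k-1$ at an entry $c_{ij}=2$ gives $(\lambda-1)(2-c_{\min})\leq t-k-3+c_{\min}$, which yields $\lambda\leq t-k$ whether $c_{\min}=0$ or $c_{\min}=1$.

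I expect case~(c) to be the main obstacle. There the minimal off-diagonal entry of~$\mathbf{C}$ need not be zero, so the trivial estimate $\mu\geq0$ is unavailable and the union-counting lower bound on $(\mathbf{B})_{ij}$ is genuinely needed; one must also check separately that this bound is strong enough when $c_{\min}=0$ and when $c_{\min}=1$, in each case reducing to $\lambda\leq t-k$. The secondary care-point is the $k=(t-1)/2$ case, where Type~III must be used precisely to guarantee that \emph{both} a zero and a two appear among the off-diagonal entries.
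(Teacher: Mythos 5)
Your proof is correct and takes essentially the same approach as the paper's: both exploit the complete symmetry of $\mathbf{A}_d'\mathbf{A}_d-(\lambda-1)(\mathbf{A}_d+\mathbf{A}_d')$, the bounds $0\le(\mathbf{A}_d'\mathbf{A}_d)_{ij}\le k-1$ when $i$ and $j$ are joined by an arc, and a case analysis (driven by the row sums and the Type~III hypothesis) of which off-diagonal values of $\mathbf{A}_d+\mathbf{A}_d'$ must occur. The only differences are cosmetic: you compute the common off-diagonal value $\mu=k(k-2\lambda+1)/(t-1)$ explicitly and use the slightly sharper inclusion--exclusion lower bound $(\mathbf{A}_d'\mathbf{A}_d)_{ij}\ge 2k-t+2-c_{ij}$, whereas the paper equates interval endpoints and uses $2k-t$; both yield the stated bounds.
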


\begin{proof}
All entries in $\mathbf{A}_d' \mathbf{A}_d$ are in the interval $[0,k]$.
If $\mathbf{A}_{d,ij} =1$ then $(\mathbf{A}_d'\mathbf{A}_d)_{ij} \in [0,k-1]$.
\begin{itemize}
\item[(a)]
If $k=(t-1)/2$ but $\mathbf{A}_d + \mathbf{A}_d'$ is not completely symmetric then some off-diagonal entries of $\mathbf{A}_d + \mathbf{A}_d'$ are equal to~$2$ while 
others are equal to~$0$.  Hence the corresponding entries of
$\mathbf{A}_d' \mathbf{A}_d - (\lambda -1)(\mathbf{A}_d + \mathbf{A}_d')$ lie in
$[-2(\lambda-1), k-1-2(\lambda-1)]$ and $[0,k]$ respectively.  If these entries are equal then
$k-1-2(\lambda-1) \geq 0$.
\item[(b)]
If $k<(t-1)/2$ then some off-diagonal entries of $\mathbf{A}_d + \mathbf{A}_d'$ are equal to $1$ or~$2$ while others are equal to~$0$.  The corresponding entries of
$\mathbf{A}_d' \mathbf{A}_d - (\lambda -1)(\mathbf{A}_d + \mathbf{A}_d')$ lie in
$[-2(\lambda-1), k-1-(\lambda-1)]$ and $[0,k]$ respectively.  If these entries are equal
then $k-1-(\lambda-1) \geq 0$.
\item[(c)]
If $k>(t-1)/2$ then $k\geq t/2$ and so the entries in $\mathbf{A}_d' \mathbf{A}_d$ are 
in $[2k-t,k]$.  Some off-diagonal entries of $\mathbf{A}_d + \mathbf{A}_d'$ are equal to~$2$,
while others are equal to $0$ or~$1$. The corresponding entries of
$\mathbf{A}_d' \mathbf{A}_d - (\lambda -1)(\mathbf{A}_d + \mathbf{A}_d')$ lie in
$[2k-t-2(\lambda-1),k-1-2(\lambda-1)]$ and $[2k-t-(\lambda-1),k]$ respectively.
If these entries are equal then $k-1-2(\lambda-1) \geq 2k-t-(\lambda-1)$.
\end{itemize}
\end{proof}

\begin{thm}\label{exist}
If $d$~is a CWBD in $\mathcal{D}_{t,n,t}$ and $d$~has Type~II or~III then
$d$~is not uniform on the periods.
\end{thm}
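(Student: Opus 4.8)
The plan is to derive a divisibility obstruction: I will show that if $d$~has Type~II or~III then $t\nmid n$, whereas uniformity on the periods forces $t\mid n$. Indeed, by condition~(a) each treatment has replication~$n$, so a design uniform on the periods must use each treatment exactly $n/t$ times in each of the $t$~periods; this is possible only if $t\mid n$. Hence it suffices to prove $t\nmid n$ for Types~II and~III.

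I would first reduce everything to a statement about $k-\lambda+1$. From~(\ref{eq:kdef}) we have $n=(\lambda-1)(t-1)+k=(\lambda-1)t+(k-\lambda+1)$, so $n\equiv k-\lambda+1\pmod t$. Thus $t\nmid n$ as soon as $0<k-\lambda+1<t$, and the whole argument comes down to locating $k-\lambda+1$ in the open interval $(0,t)$.

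For Type~II we have $\lambda=1$, so $k-\lambda+1=k=n$. Since $d$~is not a CBD, $1\le k\le t-2$, and since $d$~is connected, $n>1$; hence $2\le n\le t-2<t$ and $t\nmid n$. For Type~III I would invoke Lemma~\ref{lm:bound}. In cases~(a) and~(b) it gives $\lambda\le k$ (using $(k+1)/2\le k$ when $k\ge1$), so $k-\lambda+1\ge1$; in case~(c) it gives $\lambda\le t-k$ with $k>(t-1)/2$, whence $k-\lambda+1\ge 2k-t+1\ge1$. On the other side, $\lambda\ge2$ gives $k-\lambda+1\le k-1\le t-3$. So in every case $1\le k-\lambda+1\le t-3<t$, and again $t\nmid n$.

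The only genuine work lies in the Type~III step, and Lemma~\ref{lm:bound} is tailored exactly to it: its three bounds are precisely what keep $k-\lambda+1$ strictly positive. The point to watch is the borderline value $\lambda=k+1$, which would give $k-\lambda+1=0$ and hence $t\mid n$; I expect the main obstacle to be checking that each of the three regimes for~$k$ rules this out, i.e.\ that the lemma indeed yields $\lambda\le k$ (cases~(a),(b)) or $\lambda\le t-k$ with $2k\ge t$ (case~(c)), rather than merely $\lambda\le k+1$.
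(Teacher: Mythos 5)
Your proof is correct and takes essentially the same route as the paper's: uniformity on the periods forces $t\mid n$, equation~(\ref{eq:kdef}) reduces this to $t\mid(k-\lambda+1)$, Type~II is excluded because $n=k\le t-2$, and Type~III is excluded via Lemma~\ref{lm:bound} placing $k-\lambda+1$ strictly between $0$ and~$t$. If anything, you are slightly more explicit than the paper, which asserts $0<\lambda\le k<t-1$ for Type~III without spelling out that case~(c) of the lemma gives this only because $k>(t-1)/2$ implies $2k\ge t$, i.e.\ $t-k\le k$ — a point you verify directly.
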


\begin{proof}
If $d$~is uniform on the periods then $t$~divides~$n$.  If $d$~has Type~II then $n\leq t-1$, and so this is not possible.  If $t$~divides~$n$ then equation~(\ref{eq:kdef}) shows that 
$t$~divides $k-\lambda+1$. Lemma~\ref{lm:bound} shows that 
if $d$~has Type~III then $0<\lambda \leq k < t-1$ and so $0<k-\lambda+1<t$:
thus $t$~cannot divide $k-\lambda +1$.
\end{proof}


\section{Optimality}

\subsection{Preliminaries}

\cite{Kunert84b} showed that any CBD which is uniform on subjects is universally optimal 
for the estimation of direct as well as carry-over effects under the model (\ref{m_block}) over the class $\mathcal D_{t,n,t}$.
\cite{Druilhet} extended these results to designs where the number of periods is any multiple of~$t$.
\cite{FM12} defined circular weakly neighbour balanced designs 
to be CWBDs for $t$ periods which are uniform on subjects; they showed their universal optimality 
for the estimation of direct effects under the model (\ref{m_block}) over the class $\mathcal D_{t,n,t}$
with $n<t-1$ and over the class of equireplicated designs without self-neighbours if $n>t-1$.
One aim of this paper is to prove universal optimality for the estimation of direct as well as carry-over effects of
uniform CWBDs, CWBDs uniform on subjects and CWBDs uniform on periods under the model (\ref{model}), (\ref{m_block})
and (\ref{m_period}), respectively.

\medskip
We are interested in determining optimal designs, i.e.~designs with minimal (in some sense) variance of
the best linear unbiased estimator of the vector of parameters. \cite{Kiefer} formulated the universal optimality 
criterion in terms of the information matrix, which is the inverse of a variance-covariance matrix; 
cf.~\cite{Pukelsheim}. Therefore, following Proposition~1 of \cite{Kiefer}, we suppose 
that a class $\mathcal C=\{\mx C_{d}:d\in\mathcal D_{t,n,t}\}$
of non-negative definite information matrices with zero row and column sums contains a 
matrix $\mx C_{d^*}$ which is completely symmetric and has maximal trace over 
$\mathcal D_{t,n,t}$. Then the design $d^*$ is {\it universally optimal}\, in Kiefer's sense in the class 
$\mathcal D_{t,n,t}$.

For an $m_1\times m_2$ matrix $\mx M$ define $\omega ^\perp (\mx M) = \mx I_{m_1}-\mx M(\mx M'\mx M)^-\mx M=\mx I_{m_1}-\omega (\mx M)$
as the orthogonal projector onto the orthocomplement of the column space of $\mx M$,
where $(\mx M'\mx M)^-$ is a generalized inverse of $\mx M'\mx M$. Then 
the information matrix for the least squares estimate of $\mx \tau$ under model (\ref{model}) is given by
\[
\mx C_d^{(\ref{model})}=\mx T'_d\omega^\perp((\mx P:\mx U:\mx F_d))\mx T_d
\]
with zero row and column sums, where $(\mx P:\mx U:\mx F_d)$ is a block matrix;
cf.~e.g.~\cite{Kunert83,Kunert84a,Kunert84b}.
Since $\omega((\mx A:\mx B))=\omega(\mx A)+\omega(\omega^\perp(\mx A)\mx B)$,
we may rewrite the matrix $\mx C_d^{(\ref{model})}$ as
\[
\mx C_d^{(\ref{model})}=\mx T'_d\omega^\perp(\mx F_d)\mx T_d-\mx T'_d\omega(\omega^\perp(\mx F_d)(\mx P:\mx U))\mx T_d.
\]

Assuming models (\ref{m_block}) and (\ref{m_period}), the information matrices
for the least squares estimate of~$\mx \tau$ are
\[
\mx C_d^{(\ref{m_block})}=\mx T'_d\omega^\perp((\mx U:\mx F_d))\mx T_d=\mx T'_d\omega^\perp(\mx F_d)\mx T_d-\mx T'_d\omega({\omega^\perp(\mx F_d)\mx U})\mx T_d
\]
and
\[
\mx C_d^{(\ref{m_period})}=\mx T'_d\omega^\perp((\mx P:\mx F_d))\mx T_d=\mx T'_d\omega^\perp(\mx F_d)\mx T_d-\mx T'_d\omega({\omega^\perp(\mx F_d)\mx P})\mx T_d
\]
respectively.

Assuming models (\ref{model}), (\ref{m_block}) and (\ref{m_period}), the information matrices
for the least squares estimate of~$\mx \rho$ are
\[
\widetilde{\mx C}_d^{(\ref{model})}=\mx F'_d\omega^\perp((\mx P:\mx U:\mx T_d))\mx F_d=\mx F'_d\omega^\perp(\mx T_d)\mx F_d-\mx F'_d\omega(\omega^\perp(\mx T_d)(\mx P:\mx U))\mx F_d,
\]
\[
\widetilde{\mx C}_d^{(\ref{m_block})}=\mx F'_d\omega^\perp((\mx U:\mx T_d))\mx F_d=\mx F'_d\omega^\perp(\mx T_d)\mx F_d-\mx F'_d\omega(\omega^\perp(\mx T_d)\mx U)\mx F_d
\]
and
\[
\widetilde{\mx C}_d^{(\ref{m_period})}=\mx F'_d\omega^\perp((\mx P:\mx T_d))\mx F_d=\mx F'_d\omega^\perp(\mx T_d)\mx F_d-\mx F'_d\omega(\omega^\perp(\mx T_d)\mx P)\mx F_d
\]
respectively: cf.~e.g.~\cite{Kunert84b}.

As in \cite{Magda}, \cite{Kunert84a,Kunert84b} and \cite{Jones}, we assume 
that there are $t$ periods. 
Denote by $\mx \Theta_{t,n}$ the  $t\times n$ matrix of zeros.
Note that
$\mx T'_d\omega^\perp(\mx F_d)\mx U=\mx F'_d\omega^\perp (\mx T_d)\mx U=\mx\Theta_{t,n}
$
for every design uniform on subjects and 
$\mx T'_d\omega^\perp(\mx F_d)\mx P=\mx F'_d\omega^\perp(\mx T_d)\mx P=\mx \Theta_{t,t}$
for every design uniform on periods. 

For two symmetric matrices $\mx M_1$ and $\mx M_2$ of the same size
we say that  $\mx M_1$ is below $\mx M_2$ in the Loewner ordering,
and we write $\mx M_1\leq _L\mx M_2$, if $\mx M_2-\mx M_1$ is 
a non-negative definite matrix; cf.~\cite{Pukelsheim}.
The following proposition holds; cf.~\cite{Magda}, \cite{Kunert83}.

\begin{pr}
\label{pr_ineq}
Let $\mx C_d^{(g)}$ and $\widetilde{\mx C}_d^{(g)}$, for $g=2$, $3$ and $4$,
be the information matrices of the design $d$ in $\mathcal D_{t,n,t}$
for the estimation of $\mx \tau$ and $\mx \rho$ in model (g).
Then $\mx C_d^{(g)}\leq_L \mx T'_d\omega^\perp(\mx F_d)\mx T_d$ 
and $\widetilde{\mx C}_d^{(g)}\leq_L \mx F'_d\omega^\perp(\mx T_d)\mx F_d$,
with equality holding if and only if
\[
\mx T'_d\omega^\perp(\mx F_d)\mx U=\mx \Theta_{t,n} \qquad {\rm and} \qquad
\mx T'_d\omega^\perp(\mx F_d)\mx P= \mx{\Theta}_{t,t}
\]
and
\[
\mx F'_d\omega^\perp(\mx T_d)\mx U=\mx \Theta_{t,n} \qquad {\rm and} \qquad
\mx F'_d\omega^\perp(\mx T_d)\mx P=
\mx{\Theta}_{t,t}
\]
respectively.
\end{pr}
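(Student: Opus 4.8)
The plan is to handle the three models together. First I would write each information matrix in the common form
\[
\mx C_d^{(g)}=\mx T'_d\,\omega^\perp((\mx X_g:\mx F_d))\,\mx T_d,
\]
with $\mx X_2=(\mx P:\mx U)$, $\mx X_3=\mx U$ and $\mx X_4=\mx P$, so that $\mx T'_d\omega^\perp(\mx F_d)\mx T_d$ serves as a single comparison matrix for all three. Applying the decomposition $\omega((\mx A:\mx B))=\omega(\mx A)+\omega(\omega^\perp(\mx A)\mx B)$ quoted earlier, with $\mx A=\mx F_d$ and $\mx B=\mx X_g$, gives
\[
\omega^\perp(\mx F_d)-\omega^\perp((\mx X_g:\mx F_d))=\omega(\omega^\perp(\mx F_d)\mx X_g),
\]
and hence
\[
\mx T'_d\omega^\perp(\mx F_d)\mx T_d-\mx C_d^{(g)}=\mx T'_d\,\omega(\omega^\perp(\mx F_d)\mx X_g)\,\mx T_d .
\]
Because $\omega(\mx M)$ is an orthogonal projector for every $\mx M$, hence symmetric, idempotent and non-negative definite, the right-hand side is non-negative definite; this yields $\mx C_d^{(g)}\leq_L\mx T'_d\omega^\perp(\mx F_d)\mx T_d$ at once.

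For the equality statement I would use that, for a symmetric idempotent $\mx N=\omega(\mx M)$, one has $\mx T'_d\mx N\mx T_d=(\mx N\mx T_d)'(\mx N\mx T_d)$, so that $\mx T'_d\mx N\mx T_d=\mx\Theta_{t,t}$ if and only if $\mx N\mx T_d=\mx 0$. Taking $\mx M=\omega^\perp(\mx F_d)\mx X_g$, and noting that the column spaces of $\mx M$ and $\omega(\mx M)$ coincide, $\omega(\mx M)\mx T_d=\mx 0$ is equivalent to $\mx M'\mx T_d=\mx 0$, i.e.\ to $\mx T'_d\omega^\perp(\mx F_d)\mx X_g=\mx 0$. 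For the full model $g=2$ this is exactly the conjunction $\mx T'_d\omega^\perp(\mx F_d)\mx P=\mx\Theta_{t,t}$ and $\mx T'_d\omega^\perp(\mx F_d)\mx U=\mx\Theta_{t,n}$ displayed in the statement; and the same conjunction forces equality in models~(\ref{m_block}) and~(\ref{m_period}) as well, since each of those requires only one of the two blocks to vanish.

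Finally, the inequality for $\widetilde{\mx C}_d^{(g)}$ together with its equality condition follows verbatim by interchanging the roles of $\mx T_d$ and $\mx F_d$ throughout, producing the bound $\mx F'_d\omega^\perp(\mx T_d)\mx F_d$ and the second pair of conditions. I expect no serious obstacle: the whole argument rests on the non-negative definiteness of the projectors $\omega(\cdot)$ and on the elementary fact that $\mx T'_d\mx N\mx T_d=\mx 0$ forces $\mx N\mx T_d=\mx 0$ when $\mx N$ is a projector. The only point needing a little care is the last equivalence, passing from $\mx N\mx T_d=\mx 0$ to $\mx M'\mx T_d=\mx 0$, which is justified by the coincidence of the column spaces of $\mx M$ and $\omega(\mx M)$.
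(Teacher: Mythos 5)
Your proof is correct and takes essentially the same route as the paper: the paper gives no explicit proof of the proposition (it only cites Magda (1980) and Kunert (1983)), but its preliminaries already record exactly your decomposition $\mx C_d^{(g)}=\mx T'_d\omega^\perp(\mx F_d)\mx T_d-\mx T'_d\omega(\omega^\perp(\mx F_d)\mx X_g)\mx T_d$ with $\mx X_g$ the relevant nuisance block, so the Loewner bound is precisely the non-negative definiteness of the projector term, and the equality analysis follows, as you argue, from $\mx T'_d\omega(\mx M)\mx T_d=(\omega(\mx M)\mx T_d)'(\omega(\mx M)\mx T_d)$ together with the coincidence of the column spaces of $\mx M$ and $\omega(\mx M)$. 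Your side remark is also accurate: for $g=3$ and $g=4$ equality needs only the single condition involving $\mx U$ or $\mx P$ respectively, so the displayed conjunction is an exact characterization only for $g=2$ and merely sufficient otherwise, which is exactly how the paper later applies the proposition to designs uniform only on subjects or only on periods.
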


\subsection{Optimality results}

\cite{FM12} showed that for a CWBD
$\mathbf{S}_d\mathbf{S}_d' = \phi  \mathbf{I}_t + \xi \mathbf{J}_t$
with $\phi =n(2\lambda-1)-\lambda(\lambda-1)t-n(n-2\lambda+1)/(t-1)$
and $\xi =\lambda(\lambda-1)+n(n-2\lambda+1)/(t-1)$.
Since $\mathbf{S}_d$ is nonsingular and commutes with $\mathbf J_t$,
pre-multiplying by $\mathbf S'_d$ and post-multiplying by $(\mx S'_d)^{-1}$
we get $\mathbf{S}_d\mathbf{S}_d' = \mathbf{S}_d'\mathbf{S}_d$;
cf.~\cite[Theorem~5.2.1]{Raghavarao}, \cite{FM14}.
This shows the following.

\begin{pr}
\label{direct_carryover}
If $d$ is a CWBD then $d$ is universally optimal for the estimation of direct effects if and only if $d$ is optimal for
the estimation of carry-over effects, under the models~(\ref{model}), (\ref{m_block}), (\ref{m_period}).
\end{pr}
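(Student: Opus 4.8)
The plan is to show that, for a CWBD~$d$, the information matrix for~$\mx\tau$ and the information matrix for~$\mx\rho$ are in fact the same matrix in each of the three models, so that ``universally optimal for direct effects'' and ``optimal for carry-over effects'' become the very same condition on~$d$. The displayed normality $\mx S_d\mx S_d'=\mx S_d'\mx S_d$ proved just above is precisely the ingredient that forces this coincidence, so my first task is to turn that identity into an identity of information matrices.

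First I would record the two ``no-nuisance'' bounds appearing in Proposition~\ref{pr_ineq}. Because $\mx H_t$ is a permutation matrix, $\mx F_d=(\mx I_n\otimes\mx H_t)\mx T_d$ yields $\mx F_d'\mx F_d=\mx T_d'\mx T_d=n\mx I_t$ for a CWBD (each treatment has replication~$n$), so both $(\mx F_d'\mx F_d)^-$ and $(\mx T_d'\mx T_d)^-$ equal $n^{-1}\mx I_t$. A one-line substitution then gives $\mx T_d'\omega^\perp(\mx F_d)\mx T_d=n\mx I_t-n^{-1}\mx S_d\mx S_d'$ and $\mx F_d'\omega^\perp(\mx T_d)\mx F_d=n\mx I_t-n^{-1}\mx S_d'\mx S_d$. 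By the normality these two bounds are \emph{equal}, and since $\mx S_d\mx S_d'=\phi\mx I_t+\xi\mx J_t$ the common bound $\mx B:=n\mx I_t-n^{-1}(\phi\mx I_t+\xi\mx J_t)$ is completely symmetric.

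Next I would feed this into Proposition~\ref{pr_ineq}. It gives $\mx C_d^{(g)}\leq_L\mx B$ and $\widetilde{\mx C}_d^{(g)}\leq_L\mx B$ with the \emph{same} completely symmetric bound, and in each case equality is governed by the vanishing of the cross-terms against $\mx U$ (model~(\ref{m_block})), against $\mx P$ (model~(\ref{m_period})), or both (model~(\ref{model})). Here I would invoke the identities recorded just before Proposition~\ref{pr_ineq}: $\mx T_d'\omega^\perp(\mx F_d)\mx U=\mx F_d'\omega^\perp(\mx T_d)\mx U=\mx\Theta_{t,n}$ for designs uniform on subjects, and $\mx T_d'\omega^\perp(\mx F_d)\mx P=\mx F_d'\omega^\perp(\mx T_d)\mx P=\mx\Theta_{t,t}$ for designs uniform on periods. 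The point is that the attainment condition for~$\mx\tau$ and the attainment condition for~$\mx\rho$ are \emph{the same} condition on~$d$; hence $\mx C_d^{(g)}=\mx B$ if and only if $\widetilde{\mx C}_d^{(g)}=\mx B$, and whenever this holds one has $\mx C_d^{(g)}=\widetilde{\mx C}_d^{(g)}=\mx B$.

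Finally I would translate this into Kiefer's criterion. Universal optimality of~$d$ for~$\mx\tau$ requires $\mx C_d^{(g)}$ to be completely symmetric with maximal trace over $\mathcal D_{t,n,t}$; an optimality candidate must therefore sit at the completely symmetric bound $\mx B$, and by Step~3 this is equivalent to $\widetilde{\mx C}_d^{(g)}=\mx B=\mx C_d^{(g)}$, with the reverse implication identical. I expect the one genuinely non-formal point, and the main obstacle, to be matching the \emph{maximal-trace} halves of the two problems: $\mx\tau$-optimality maximises $\tr\mx C_{d'}^{(g)}$ while $\mx\rho$-optimality maximises $\tr\widetilde{\mx C}_{d'}^{(g)}$ over families that are not manifestly the same. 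I would close this gap with the period-reversal symmetry of $\mathcal D_{t,n,t}$: reversing the cyclic order of the periods is a trace-preserving bijection of the class that interchanges the direct- and carry-over design matrices, hence interchanges $\mx C_{d'}^{(g)}$ and $\widetilde{\mx C}_{d'}^{(g)}$ and makes the two class-wide trace maxima agree. With the information matrices equal and the two maxima equal, the direct and carry-over optimality statements for~$d$ coincide, which is the assertion.
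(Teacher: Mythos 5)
Your proposal pivots on exactly the same fact as the paper's own (very terse) proof: the normality $\mx S_d\mx S_d'=\mx S_d'\mx S_d$, which the paper derives immediately before the proposition and then simply declares sufficient (``This shows the following''). Your Step~1 --- $\mx T_d'\omega^\perp(\mx F_d)\mx T_d=n\mx I_t-n^{-1}\mx S_d\mx S_d'$ and $\mx F_d'\omega^\perp(\mx T_d)\mx F_d=n\mx I_t-n^{-1}\mx S_d'\mx S_d$, hence the two Loewner bounds coincide and are completely symmetric --- is precisely the computation the paper leaves implicit. Your period-reversal argument for matching the two class-wide trace maxima is a genuine (and correct) addition on a point the paper never addresses: reversing the periods is a bijection of $\mathcal D_{t,n,t}$ and of $\Lambda_{t,n,t}$, it is an orthogonal relabelling of the observations fixing the column spaces of $\mx P$ and $\mx U$, and it interchanges left- and right-neighbour incidence, hence interchanges $\mx C_{d'}^{(g)}$ and $\widetilde{\mx C}_{d'}^{(g)}$.

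Two load-bearing steps in your write-up, however, are asserted rather than proved, and neither follows from what you cite. First, the attainment conditions in Proposition~\ref{pr_ineq} for $\mx\tau$ and for $\mx\rho$ are \emph{different} matrix equations: with $\mx N=\mx T_d'\mx U=\mx F_d'\mx U$, they read $(n\mx I_t-\mx S_d)\mx N=\mx\Theta_{t,n}$ and $(n\mx I_t-\mx S_d')\mx N=\mx\Theta_{t,n}$ respectively (similarly for $\mx P$). The identities you invoke only say that \emph{uniform} designs satisfy both; that does not make the two conditions ``the same condition on $d$''. They are in fact equivalent for a connected CWBD, but this needs an argument you do not give: writing each column of $\mx N$ as $\mx 1_t+\mx v$ with $\mx 1_t'\mx v=0$, normality gives $\|\mx S_d\mx v\|^2=\|\mx S_d'\mx v\|^2=\phi\|\mx v\|^2$, and connectedness forces $\phi<n^2$, so either condition forces $\mx v=\mx 0_t$, i.e.\ uniformity on subjects (and analogously on periods). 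Second, ``an optimality candidate must sit at the bound $\mx B$'' is not a formal consequence of Kiefer's criterion: complete symmetry of $\mx C_d^{(g)}$ together with $\mx C_d^{(g)}\leq_L\mx B$ only yields $\mx C_d^{(g)}=\mx B-c(\mx I_t-t^{-1}\mx J_t)$ for some $c\geq 0$, and maximality of the trace over the class does not kill $c$ unless one also knows that the class maximum equals $\tr\mx B$, which is exactly what the optimality theorems have yet to establish. To be fair, the paper's one-line proof glosses over both points too, so your attempt is strictly more detailed than the original; but as a standalone proof it needs these two repairs.
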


Let $\Lambda_{t,n,t}$ be the class of designs in $\mathcal D_{t,n,t}$ with no treatment preceded by itself.
Using the above Proposition we can complete Theorem~3.1 and Theorem~3.2 of \cite{FM12} 
as follows.

\begin{thm}
\label{interference}
If there exists a CWBD in $\mathcal D_{t,n,t}$ 
which is uniform on subjects, then it is universally optimal for the estimation
of left-neighbour effects under the model (\ref{m_block}) over the collection of 
designs in $\mathcal D_{t,n,t}$ if $n\leq t-1$ and over the collection of equireplicated
designs in $\Lambda_{t,n,t}$ otherwise.
\end{thm}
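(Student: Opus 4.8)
The plan is to reduce the statement to the direct-effect optimality already established in \cite{FM12} and then to transfer it to left-neighbour (equivalently, carry-over) effects. By Theorems~3.1 and~3.2 of \cite{FM12}, a CWBD~$d^*$ in $\mathcal D_{t,n,t}$ that is uniform on subjects is universally optimal for the estimation of $\mx\tau$ under model~(\ref{m_block}) over $\mathcal D_{t,n,t}$ when $n\leq t-1$ and over the equireplicated designs in $\Lambda_{t,n,t}$ when $n>t-1$. I would take these as given and show that the same design, over the same competing classes, delivers universal optimality for $\mx\rho$.

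First I would make the transfer explicit for $d^*$ itself. Because $d^*$ is uniform on subjects, $\mx T'_{d^*}\omega^\perp(\mx F_{d^*})\mx U=\mx F'_{d^*}\omega^\perp(\mx T_{d^*})\mx U=\mx\Theta_{t,n}$, so, under model~(\ref{m_block}), Proposition~\ref{pr_ineq} gives equality in both Loewner bounds: $\mx C_{d^*}^{(\ref{m_block})}=\mx T'_{d^*}\omega^\perp(\mx F_{d^*})\mx T_{d^*}$ and $\widetilde{\mx C}_{d^*}^{(\ref{m_block})}=\mx F'_{d^*}\omega^\perp(\mx T_{d^*})\mx F_{d^*}$. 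Since $\mx H_t$ is a permutation matrix, $\mx T'_{d^*}\mx T_{d^*}=\mx F'_{d^*}\mx F_{d^*}=n\mx I_t$, and these two bounds collapse to $n\mx I_t-n^{-1}\mx S_{d^*}\mx S'_{d^*}$ and $n\mx I_t-n^{-1}\mx S'_{d^*}\mx S_{d^*}$. The identity $\mx S_{d^*}\mx S'_{d^*}=\mx S'_{d^*}\mx S_{d^*}$ noted before Proposition~\ref{direct_carryover} then forces $\widetilde{\mx C}_{d^*}^{(\ref{m_block})}=\mx C_{d^*}^{(\ref{m_block})}$; in particular $\widetilde{\mx C}_{d^*}^{(\ref{m_block})}$ is completely symmetric and has the same trace as the direct-effect information matrix.

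It remains to bound $\tr\widetilde{\mx C}_{d}^{(\ref{m_block})}$ for every competitor~$d$ by $\tr\widetilde{\mx C}_{d^*}^{(\ref{m_block})}$. By Proposition~\ref{pr_ineq} it suffices to bound $\tr\bigl(\mx F'_{d}\omega^\perp(\mx T_{d})\mx F_{d}\bigr)$, and I would do this by a period-reversal duality. Reversing the cyclic order of periods sends~$d$ to a design~$d^{R}$ in the same class, with $\mx S_{d^{R}}=\mx S'_{d}$ and the same treatment replications; a short computation then identifies $\mx F'_{d}\omega^\perp(\mx T_{d})\mx F_{d}$ with the direct-effect bound $\mx T'_{d^{R}}\omega^\perp(\mx F_{d^{R}})\mx T_{d^{R}}$ of~$d^{R}$. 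The trace bound of \cite{FM12} applied to~$d^{R}$ therefore yields $\tr\widetilde{\mx C}_{d}^{(\ref{m_block})}\leq\tr\mx C_{d^*}^{(\ref{m_block})}=\tr\widetilde{\mx C}_{d^*}^{(\ref{m_block})}$. Complete symmetry together with maximal trace is universal optimality in Kiefer's sense, which is the assertion; this transfer is precisely what Proposition~\ref{direct_carryover} packages.

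The step I expect to be the main obstacle is this period-reversal duality used to carry the competitor trace bound from direct to carry-over effects, because the Loewner bounds of Proposition~\ref{pr_ineq} are not symmetric in $\mx T_{d}$ and $\mx F_{d}$ for a general, possibly non-equireplicated, competitor. One must check that reversal keeps~$d$ inside the prescribed competing class and that it sends the carry-over bound of~$d$ to the direct bound of~$d^{R}$; here it is essential that $\mathcal D_{t,n,t}$, the equireplication condition, and the no-self-neighbour condition defining $\Lambda_{t,n,t}$ are each invariant under reversal.
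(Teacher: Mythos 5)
Your proposal is correct and follows essentially the same route as the paper: the paper obtains Theorem~\ref{interference} directly from Theorems~3.1 and~3.2 of \cite{FM12} together with Proposition~\ref{direct_carryover}, whose justification is exactly the identity $\mathbf{S}_{d}\mathbf{S}_{d}'=\mathbf{S}_{d}'\mathbf{S}_{d}$ that you invoke. The only difference is that you spell out the competitor step --- the period-reversal duality identifying the carry-over bound of any design with the direct-effect bound of its reversal, and the invariance of the competing classes under reversal --- which the paper leaves implicit inside Proposition~\ref{direct_carryover}.
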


Observe 
that a 
necessary condition for the existence of designs which are uniform on periods
is 
that $n$ is divisible by $t$. 
Therefore, considering CWBD uniform on periods, we have to assume that 
$n>t-1$. The following theorem can be proved in the same way as Theorem~3.2 of \cite{FM12}
using additionally Proposition~{\ref{direct_carryover}} of this paper.
 
\begin{thm}
\label{periods}
Assume that $t>2$ and $n>t-1$.
If there exists a CWBD in $\Lambda_{t,n,t}$ which is uniform on periods,
then it is universally optimal for the estimation of direct as well as carry-over
effects under the model (\ref{m_period}) over the collection of equireplicated designs
in $\Lambda_{t,n,t}$.
\end{thm}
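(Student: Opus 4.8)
The plan is to mirror the proof of Theorem~3.2 of \cite{FM12}, applying the criterion of \cite{Kiefer} (Proposition~1) to model~(\ref{m_period}) over the equireplicated designs in $\Lambda_{t,n,t}$, and then to pass from direct to carry-over effects by Proposition~\ref{direct_carryover}. The first step is to put the common Loewner bound of Proposition~\ref{pr_ineq} into closed form on this class. For any equireplicated design in $\Lambda_{t,n,t}$ every treatment has replication~$n$, so $\mx T_d'\mx T_d=n\mx I_t$; since $\mx I_n\otimes\mx H_t$ is a permutation matrix it preserves Gram matrices, whence $\mx F_d'\mx F_d=\mx T_d'\mx T_d=n\mx I_t$, while $\mx T_d'\mx F_d=\mx S_d$ by definition. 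Substituting $(\mx F_d'\mx F_d)^-=n^{-1}\mx I_t$ gives
\[
\mx T_d'\omega^\perp(\mx F_d)\mx T_d
=\mx T_d'\mx T_d-\mx T_d'\mx F_d(\mx F_d'\mx F_d)^-\mx F_d'\mx T_d
=n\mx I_t-\tfrac{1}{n}\mx S_d\mx S_d',
\]
so that $\tr\bigl(\mx T_d'\omega^\perp(\mx F_d)\mx T_d\bigr)=nt-\tfrac{1}{n}\sum_{i,j}s_{d,ij}^2$. By Proposition~\ref{pr_ineq} this bounds $\tr\mx C_d^{(\ref{m_period})}$ from above for every design in the class.

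The combinatorial core is then to minimise $\sum_{i,j}s_{d,ij}^2$ over the class, equivalently to maximise the trace bound. A design in $\Lambda_{t,n,t}$ has $s_{d,ii}=0$, and equireplication forces every row sum of $\mx S_d$ to equal~$n$; thus each row distributes the total $n$ among $t-1$ non-negative integer off-diagonal entries. The sum of squares of such a row is minimised, by convexity, when the entries are as equal as possible, i.e.\ $k$ of them equal to $\lambda=\lceil n/(t-1)\rceil$ and the remaining $t-1-k$ equal to $\lambda-1$, with $k=n-(\lambda-1)(t-1)$ as in~(\ref{eq:kdef}). Summing this row bound over all $t$ rows gives a lower bound on $\sum_{i,j}s_{d,ij}^2$ that is attained exactly by the off-diagonal pattern of a CWBD. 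Hence a CWBD maximises the trace bound over the whole class.

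It remains to check that the hypothesised CWBD $d^*$, being uniform on periods, realises the bound with a completely symmetric information matrix. Uniformity on periods yields $\mx T_{d^*}'\omega^\perp(\mx F_{d^*})\mx P=\mx\Theta_{t,t}$, so the equality clause of Proposition~\ref{pr_ineq} for model~(\ref{m_period}) gives
\[
\mx C_{d^*}^{(\ref{m_period})}=\mx T_{d^*}'\omega^\perp(\mx F_{d^*})\mx T_{d^*}=n\mx I_t-\tfrac{1}{n}\mx S_{d^*}\mx S_{d^*}'.
\]
Since $d^*$ is a CWBD, $\mx S_{d^*}\mx S_{d^*}'$ is completely symmetric, hence so is $\mx C_{d^*}^{(\ref{m_period})}$; the zero row-sum condition follows from $\mx S_{d^*}\mx 1_t=\mx S_{d^*}'\mx 1_t=n\mx 1_t$. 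Thus $\mx C_{d^*}^{(\ref{m_period})}$ is completely symmetric and, by the previous step, of maximal trace, so Proposition~1 of \cite{Kiefer} yields universal optimality of $d^*$ for direct effects. Finally, Proposition~\ref{direct_carryover} upgrades this at once to universal optimality for carry-over effects under model~(\ref{m_period}).

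The main obstacle is really the opening reduction: the closed form $n\mx I_t-\tfrac{1}{n}\mx S_d\mx S_d'$ rests on equireplication, which is precisely what makes $\mx F_d'\mx F_d$ scalar and lets $\omega^\perp(\mx F_d)$ be evaluated in terms of $\mx S_d$ alone, so the restriction to equireplicated designs in the hypothesis is structural rather than cosmetic. After that the trace optimisation is a routine row-wise convexity estimate, saturated by the $\{\lambda-1,\lambda\}$ pattern of a CWBD, and both of the design-specific facts needed for Kiefer's criterion---complete symmetry of $\mx C_{d^*}^{(\ref{m_period})}$ and saturation of the trace bound---fall directly out of the CWBD axioms together with uniformity on periods. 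The bulk of the difficulty is thus already absorbed into Proposition~\ref{pr_ineq} and Proposition~\ref{direct_carryover}.
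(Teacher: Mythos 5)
Your proposal is correct and follows essentially the same route as the paper, whose entire ``proof'' is the remark that the theorem ``can be proved in the same way as Theorem~3.2 of \cite{FM12} using additionally Proposition~\ref{direct_carryover}'': your Kiefer-criterion argument (equireplication giving $\mx F_d'\mx F_d=n\mx I_t$, the closed-form bound $n\mx I_t-\tfrac{1}{n}\mx S_d\mx S_d'$ from Proposition~\ref{pr_ineq}, row-wise integer convexity saturated by the $\{\lambda-1,\lambda\}$ pattern, and equality via uniformity on periods) is precisely the FM12 Theorem~3.2 argument transposed from $\mx U$ to $\mx P$, and you finish with Proposition~\ref{direct_carryover} exactly as the paper prescribes. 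In effect you have written out in full the proof the paper only sketches by citation, with no substantive deviation.
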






We now aim to strengthen Theorem~\ref{interference} by removing the condition that the competing designs are equireplicate when $n\geq t$.  Theorem~\ref{addCBD} shows that if $d$~is a CWBD which  is not a CBD then we can make larger CWBDs by juxtaposing $d$ with one or more CBDs only if $d$ has Type~I.  Therefore we restrict attention to the case that the parameter~$k$ in Section~\ref{sec:exist} is equal to $(t-1)/2$, where $t\equiv 3\bmod{4}$, and $n$~is an odd multiple of $(t-1)/2$.

At the same time, we aim to prove the analogous result for uniform designs under 
model~(\ref{model}).  Theorem~\ref{exist} shows that if $d$ is a uniform CWBD which is not a CBD then $t\equiv 3\bmod{4}$, $k=(t-1)/2$ and $n$~is an odd multiple of $t(t-1)/2$.  Thus we now suppose that $n=(2h+1)(t-1)/2$, where $h$ is an arbitrary integer and $2h+1\geq t$.
We give two technical lemmas and three propositions which are useful to prove our two results, which are combined in Theorem~\ref{main}.


We denote by $n_{d,iu}$ the number of times that treatment~$i$ appears in 
the $u$th subject ($\mx T'_d\mx U=\mx F'_d\mx U=(n_{d,iu})$) and by $r_{d,i}$ the number of times
that treatment $i$ appears in the design. We further define
\[
a_d=\sum_{i=1}^t\sum_{u=1}^n \max\{n_{d,iu}-1, 0\}.
\]
As shown by e.g.~\cite{Kunert84b}, if $g=2$ or $g=3$ then
\[
\tr \mx C_d^{(g)} \le \sum_{i=1}^t r_{d,i} -\frac{1}{t} \sum_{i=1}^t \sum_{u=1}^n n_{d,iu}^2 - 
\sum_{i=1}^t \sum_{j=1}^t \frac{(s_{d,ij}-\frac{1}{t} \sum_{u=1}^n n_{d,iu} n_{d,ju} )^2  } {r_{d,j}}.
\]

We will use the following technical lemmas.

\begin{lm}
\label{lemma1}
Assume that $x_1, x_2, \dots, x_k$ satisfy $\sum_{i=1}^k x_i =c$.
Then $\sum_{i=1}^k x_i^2 \ge \frac{1}{k} c^2$.
\end{lm}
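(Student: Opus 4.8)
The plan is to recognize the statement as an elementary form of the Cauchy--Schwarz inequality (equivalently, the assertion that a sample variance is non-negative), and to give the shortest self-contained argument. First I would pair the all-ones vector $(1,\dots,1)$ with $(x_1,\dots,x_k)$ and apply Cauchy--Schwarz, obtaining
\[
c^2=\left(\sum_{i=1}^k x_i\right)^2=\left(\sum_{i=1}^k 1\cdot x_i\right)^2\le\left(\sum_{i=1}^k 1\right)\left(\sum_{i=1}^k x_i^2\right)=k\sum_{i=1}^k x_i^2,
\]
and then divide by $k$ to reach the claim $\sum_{i=1}^k x_i^2\ge c^2/k$.

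As an alternative, and to keep the equality case transparent (which is likely what matters when this lemma is later used to force tightness in the trace bound preceding it), I would instead set $\bar x=c/k$ and expand
\[
\sum_{i=1}^k x_i^2=\sum_{i=1}^k(x_i-\bar x)^2+k\bar x^2=\sum_{i=1}^k(x_i-\bar x)^2+\frac{c^2}{k}\ge\frac{c^2}{k},
\]
where the cross term vanishes because $\sum_{i=1}^k(x_i-\bar x)=0$. This form displays directly that equality holds precisely when every $x_i$ equals $c/k$.

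There is essentially no obstacle here: the inequality is completely elementary, holds for arbitrary real $x_i$, and needs no hypothesis on their signs or ranges. The only point worth flagging is that the intended application in the sequel almost certainly relies on the equality condition, so I would make sure to record that $\sum_{i=1}^k x_i^2 = c^2/k$ exactly when $x_1=\dots=x_k=c/k$, even though the bare inequality is all that the statement literally asserts.
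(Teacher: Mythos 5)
Your proposal is correct, and your second argument is essentially identical to the paper's own proof: the paper defines $e_i = x_i - \frac{c}{k}$, notes $\sum_{i=1}^k e_i = 0$, and expands $\sum_{i=1}^k x_i^2 = k\left(\frac{c}{k}\right)^2 + \sum_{i=1}^k e_i^2 \ge \frac{c^2}{k}$, which is exactly your mean-deviation decomposition. The Cauchy--Schwarz variant you lead with is a fine alternative but adds nothing here, and the equality condition you flag is indeed the feature the paper exploits (in Lemma~2's refinement and in the trace bounds that follow).
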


\begin{proof}
Define $e_i=x_i-\frac{c}{k}$ for $1\le i \le k$. Then $\sum_{i=1}^k e_i =0$. It follows that
\[
\sum_{i=1}^k x_i^2 =k\left(\frac{c}{k}\right)^2+ \sum_{i=1}^k e_i^2 \ge \frac{c^2}{k}.
\]
\end{proof}

\begin{lm}
\label{lemma2}
Assume that $x_1, x_2, \dots, x_k$ are integers, such that $\sum_{i=1}^k x_i =c$ and 
that $c/k$ is also an integer. Define $\sum_{i=1}^k \max \{x_i-\frac{c}{k}, 0\}=a$.
Then $\sum_{i=1}^k x_i^2 \ge \frac{1}{k} c^2 + 2a$.
\end{lm}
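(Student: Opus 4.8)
The plan is to follow the shift-by-the-mean idea already used in Lemma~\ref{lemma1}, but to extract the extra term $2a$ from the integrality hypothesis. First I would set $m = c/k$ and define $e_i = x_i - m$ for $1 \le i \le k$; since $m$ is assumed to be an integer, every $e_i$ is an integer, and $\sum_{i=1}^k e_i = c - km = 0$. The same expansion as in Lemma~\ref{lemma1} then gives
\[
\sum_{i=1}^k x_i^2 = km^2 + 2m\sum_{i=1}^k e_i + \sum_{i=1}^k e_i^2 = \frac{c^2}{k} + \sum_{i=1}^k e_i^2,
\]
so the whole problem reduces to proving $\sum_{i=1}^k e_i^2 \ge 2a$, where $a = \sum_{i=1}^k \max\{e_i, 0\}$.

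Next I would partition the indices by sign, writing $P = \{i : e_i > 0\}$ and $N = \{i : e_i < 0\}$. Because the $e_i$ sum to zero, the total positive excess balances the total negative deficit, so $\sum_{i \in P} e_i = -\sum_{i \in N} e_i = a$. This identifies $a$ simultaneously with $\sum_{i\in P}e_i$ and with $\sum_{i\in N}|e_i|$, which is precisely what lets the bound double up.

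The key step---and the only place the integrality is really needed---is the elementary fact that every integer $e$ satisfies $e^2 \ge |e|$. Applying this termwise gives $\sum_{i \in P} e_i^2 \ge \sum_{i \in P} e_i = a$ and $\sum_{i \in N} e_i^2 \ge \sum_{i \in N} |e_i| = a$; adding these (the indices with $e_i=0$ contributing nothing) yields $\sum_{i=1}^k e_i^2 \ge 2a$, which combined with the display above completes the argument. I expect the only thing to watch is the role of the integrality assumption: over the reals the inequality $e^2 \ge |e|$ fails for small $|e|$, so the additive correction $2a$ genuinely relies on both the $x_i$ and $c/k$ being integers---exactly the extra hypotheses that distinguish this lemma from Lemma~\ref{lemma1}.
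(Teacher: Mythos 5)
Your proof is correct and follows essentially the same route as the paper: both reduce to $\sum_i e_i^2 \ge 2a$ via the mean-shift expansion and then use the integrality fact $e_i^2 \ge |e_i|$ together with $\sum_i |e_i| = 2a$ (the paper states this directly from the vanishing sum, while you split it into the positive and negative index sets, which is the same observation). No gaps; nothing further to add.
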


\begin{proof}
Define $e_i$ as in the proof of Lemma~\ref{lemma1}.
Then all $e_i$ are integers and, therefore, $e_i^2 \ge |e_i |$ for all $i$. Since the sum of all positive $e_i$ is $a$
and $\sum_{i=1}^k e_i =0$, we conclude that  $\sum_{i=1}^k |e_i | =2a$. In all, we get
\[
\sum_{i=1}^k x_i^2 = k\left(\frac{c}{k}\right)^2+ \sum_{i=1}^k e_i^2 \ge \frac{c^2}{k}+2a.
\]
\end{proof}

We use the following propositions.

\begin{pr}
\label{pr_tail}
For any design $d\in \Lambda_{t,n,t}$ we have for any treatment $j$ that
\[
\sum_{i=1}^t \left(s_{d,ij}-\frac{1}t \sum_{u=1}^n n_{d,iu} n_{d,ju}\right )^2  \ge \frac{r_{d,j}^2}{t(t-1)}.
\]
\end{pr}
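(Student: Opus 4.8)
The plan is to abbreviate $c_{ij} = s_{d,ij} - \tfrac1t\sum_{u=1}^n n_{d,iu}n_{d,ju}$, so that the left-hand side is $\sum_{i=1}^t c_{ij}^2$ for the fixed treatment~$j$. The guiding idea is that the quantities $c_{ij}$ sum to zero over~$i$, so Lemma~\ref{lemma1} will turn a known value of one of them into a lower bound for the sum of squares. The crucial extra ingredient, which is what makes the constant come out as $r_{d,j}^2/(t(t-1))$ rather than something weaker, is that the diagonal term $c_{jj}$ is already determined because $d\in\Lambda_{t,n,t}$ forces $s_{d,jj}=0$.

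First I would compute $\sum_{i=1}^t c_{ij}$. The column sum of $\mathbf{S}_d$ gives $\sum_{i=1}^t s_{d,ij}=r_{d,j}$, and since each subject receives one treatment in each of the $t$ periods we have $\sum_{i=1}^t n_{d,iu}=t$. Hence
\[
\sum_{i=1}^t c_{ij}=r_{d,j}-\frac1t\sum_{u=1}^n n_{d,ju}\sum_{i=1}^t n_{d,iu}
=r_{d,j}-\sum_{u=1}^n n_{d,ju}=r_{d,j}-r_{d,j}=0 .
\]
Next I would isolate the diagonal entry. Because $d\in\Lambda_{t,n,t}$ has no treatment preceded by itself, $s_{d,jj}=0$, and therefore $c_{jj}=-\tfrac1t\sum_{u=1}^n n_{d,ju}^2$. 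Writing $c=\tfrac1t\sum_{u=1}^n n_{d,ju}^2$, the vanishing sum gives $\sum_{i\neq j}c_{ij}=-c_{jj}=c$, while $c_{jj}^2=c^2$.

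Applying Lemma~\ref{lemma1} to the $t-1$ off-diagonal values $\{c_{ij}:i\neq j\}$, whose sum is~$c$, yields $\sum_{i\neq j}c_{ij}^2\ge c^2/(t-1)$. Adding the diagonal contribution gives
\[
\sum_{i=1}^t c_{ij}^2 = c_{jj}^2+\sum_{i\neq j}c_{ij}^2
\ge c^2+\frac{c^2}{t-1}=\frac{t}{t-1}\,c^2 .
\]
The final step is the integrality estimate: each $n_{d,ju}$ is a non-negative integer, so $n_{d,ju}^2\ge n_{d,ju}$, whence $\sum_u n_{d,ju}^2\ge\sum_u n_{d,ju}=r_{d,j}$ and thus $c\ge r_{d,j}/t$. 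Substituting $c^2\ge r_{d,j}^2/t^2$ into the display gives exactly $\sum_i c_{ij}^2\ge \tfrac{t}{t-1}\cdot\tfrac{r_{d,j}^2}{t^2}=\tfrac{r_{d,j}^2}{t(t-1)}$, as required.

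There is no genuine obstacle here once the two combinatorial identities ($\sum_i s_{d,ij}=r_{d,j}$ and $\sum_i n_{d,iu}=t$) are in hand; the only point requiring care is recognising that the hypothesis $d\in\Lambda_{t,n,t}$ is used in an essential way to pin down $c_{jj}$, and that the bound $\sum_u n_{d,ju}^2\ge r_{d,j}$ must come from integrality rather than from Lemma~\ref{lemma1} (which would only give $\sum_u n_{d,ju}^2\ge r_{d,j}^2/n$, the wrong direction for a clean constant). Assembling the diagonal equality, the off-diagonal application of Lemma~\ref{lemma1}, and the integrality step in this order is what produces the stated inequality with its sharp coefficient.
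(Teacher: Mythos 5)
Your proof is correct and follows essentially the same route as the paper's: isolate the diagonal term $c_{jj}=-\frac1t\sum_u n_{d,ju}^2$ (using $s_{d,jj}=0$ for designs in $\Lambda_{t,n,t}$), apply Lemma~\ref{lemma1} to the $t-1$ off-diagonal terms whose sum is forced by the zero total, and finish with the integrality bound $\sum_u n_{d,ju}^2\ge r_{d,j}$. The only difference is cosmetic: you make explicit the zero-sum identity and the integrality step that the paper leaves implicit.
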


\begin{proof}
Since $\sum_{i=1}^t \sum_{u=1}^n n_{d,iu} =nt$, it follows from Lemma~\ref{lemma2} that
\[
\sum_{i=1}^t \sum_{u=1}^n n_{d,iu}^2  \ge nt+2a_d.
\]
Now consider an arbitrary treatment $j$. Note that for all competing designs all $s_{d,jj}=0$. Therefore,
\[
\sum_{i=1}^t (s_{d,ij}-\frac{1}t \sum_{u=1}^n n_{d,iu} n_{d,ju}) =
-\frac{1}t \sum_{u=1}^n n_{d,ju}^2  +\sum_{i \ne j} (s_{d,ij}-\frac{1}t \sum_{u=1}^n n_{d,iu} n_{d,ju} ).
\]
It follows that
\[
\sum_{i \ne j}  (s_{d,ij}-\frac{1}t \sum_{u=1}^n n_{d,iu} n_{d,ju}) = \frac{1}t \sum_{u=1}^n n_{d,ju}^2.
\]

Applying Lemma~\ref{lemma1}, we conclude that
\[
\sum_{i=1}^t (s_{d,ij}-\frac{1}t \sum_{u=1}^n n_{d,iu} n_{d,ju} )^2  =
\frac{1}{t^2}\left(\sum_{u=1}^n n_{d,ju}^2 \right)^2  + \sum_{i \ne j} (s_{d,ij}-\frac{1}t \sum_{u=1}^n n_{d,iu} n_{d,ju} )^2
\]
\[
\ge \frac{1}{t^2} \left(\sum_{u=1}^n n_{d,ju}^2\right)^2  + \frac {1}{t-1}  \frac{1}{t^2} \left(\sum_{u=1}^n  n_{d,ju}^2 \right)^2
 =\frac {1} {t(t-1)} \left(\sum_{u=1}^n n_{d,ju}^2 \right)^2  \ge \frac {r_{d,j}^2}{t(t-1)} .
\]
\end{proof}

We therefore get 
the following general bound for the trace of the information matrix which depends on $a_d$.

\begin{pr}
\label{bound}
For any design $d \in \Lambda_{t,n,t}$ and $g=2,3$ we have
\[
\tr \mx C_d^{(g)} \le n(t-1-\frac{1}{t-1})-\frac{2a_d}{t}.
\]
\end{pr}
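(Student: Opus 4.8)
The plan is to combine the displayed upper bound for $\tr \mx C_d^{(g)}$ quoted from \cite{Kunert84b} with the two technical lemmas and Proposition~\ref{pr_tail}. Recall that bound reads
\[
\tr \mx C_d^{(g)} \le \sum_{i=1}^t r_{d,i} -\frac{1}{t} \sum_{i=1}^t \sum_{u=1}^n n_{d,iu}^2 -
\sum_{j=1}^t \frac{1}{r_{d,j}}\sum_{i=1}^t\left(s_{d,ij}-\frac{1}{t} \sum_{u=1}^n n_{d,iu} n_{d,ju} \right)^2 .
\]
I would bound the three groups of terms separately. The first sum is simply $\sum_i r_{d,i}=nt$, since the total number of treatment appearances over all $t$ periods and $n$ subjects is $nt$. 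For the middle sum I would invoke Lemma~\ref{lemma2}: applying it with the $x_i$ ranging over all $nt$ entries $n_{d,iu}$ (whose total is $nt$, with $nt/(nt)=1$ an integer and whose excess-over-mean count is exactly $a_d$ by definition) gives $\sum_{i,u} n_{d,iu}^2 \ge nt + 2a_d$, so the term $-\tfrac1t\sum_{i,u}n_{d,iu}^2$ is at most $-n-\tfrac{2a_d}{t}$.

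For the last (and negative) group of terms, the key move is to feed in Proposition~\ref{pr_tail}, which supplies the lower bound $\sum_{i}(s_{d,ij}-\tfrac1t\sum_u n_{d,iu}n_{d,ju})^2 \ge r_{d,j}^2/(t(t-1))$ for each $j$. Dividing by $r_{d,j}$ and summing over $j$ then contributes a term bounded above by $-\sum_j r_{d,j}/(t(t-1)) = -nt/(t(t-1)) = -n/(t-1)$. Assembling the three pieces yields
\[
\tr \mx C_d^{(g)} \le nt - n - \frac{2a_d}{t} - \frac{n}{t-1}
= n\Bigl(t-1-\frac{1}{t-1}\Bigr) - \frac{2a_d}{t},
\]
which is exactly the claimed inequality.

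The main thing to watch is the direction of the inequalities when the terms carry a minus sign: the lemmas give \emph{lower} bounds on the quantities being subtracted, which correctly become \emph{upper} bounds on $\tr \mx C_d^{(g)}$, so everything points the right way. A secondary point worth a sentence is that Proposition~\ref{pr_tail} already presupposes $d\in\Lambda_{t,n,t}$ (it uses $s_{d,jj}=0$), so the hypothesis of this proposition is exactly what is needed to apply it; dividing by $r_{d,j}$ is legitimate because a connected design has $r_{d,j}>0$. I do not expect a genuine obstacle here—the work has been front-loaded into the two lemmas and Proposition~\ref{pr_tail}, and this proposition is essentially a bookkeeping assembly of those three estimates together with the trivial identity $\sum_i r_{d,i}=nt$.
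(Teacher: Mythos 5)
Your proposal is correct and follows essentially the same route as the paper: the paper's proof likewise plugs $\sum_i r_{d,i}=nt$, the Lemma~\ref{lemma2} estimate $\sum_{i,u}n_{d,iu}^2\ge nt+2a_d$ (which the paper records at the start of the proof of Proposition~\ref{pr_tail}), and the per-treatment bound of Proposition~\ref{pr_tail} into the quoted trace inequality, then sums $r_{d,j}/(t(t-1))$ over $j$ to get $n/(t-1)$ and simplifies. Your added remarks about inequality directions and about $\Lambda_{t,n,t}$ being needed for $s_{d,jj}=0$ match the implicit assumptions in the paper's argument.
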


\begin{proof}
Applying Proposition~\ref{pr_tail}, we get
\begin{eqnarray*}
\tr \mx C_d^{(g)} & \le & \displaystyle{
nt-\frac{1}{t}(nt+2a_d)-\sum_{j=1}^t \frac{1}{r_{d,j}}  \sum_{i=1}^t (s_{d,ij}-\frac{1}{t} \sum_{u=1}^n n_{d,iu} n_{d,ju} )^2} \\
& \le & \displaystyle{ nt-\frac{1}{t} (nt+2a_d )-\sum_{j=1}^t \frac {r_{d,j}}{t(t-1)} = nt-\frac {1}{t} (nt+2a_d )-\frac{n}{t-1}}\\
& = &\displaystyle{n(t-1-\frac{1}{t-1})-\frac {2a_d}{t}}.
\end{eqnarray*}
\end{proof}

The above bound is well-known; it was used by \cite{Kunert84a,Kunert84b}.
If $a_d$ is small, we get a sharper bound which is derived in the next proposition.

\begin{pr}
\label{sharp_bound}
Assume the design $d$ is such that $a_d \le (t-1)/2$. Then
\[
\tr \mx C_d^{(g)} \le n\left(t-1-\frac{1}{t-1}\right)-\frac{2a_d}{t}-(t-2a_d )\left(\frac{t-1}{4n}-\frac{2a_d}{nt}\right).
\]
\end{pr}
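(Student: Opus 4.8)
The plan is to sharpen Proposition~\ref{bound} by recovering the variance that Lemma~\ref{lemma1} deliberately discards in the proof of Proposition~\ref{pr_tail}. Fix a treatment~$j$ and, as there, set $x_i=s_{d,ij}-\frac1t\sum_{u}n_{d,iu}n_{d,ju}$, so that $\sum_{i=1}^t x_i=0$ and $x_j=-\frac1t\sum_u n_{d,ju}^2$. Writing $q_j=\sum_u n_{d,ju}^2$ and $\bar x=q_j/\bigl(t(t-1)\bigr)$, the exact decomposition $\sum_{i=1}^t x_i^2=\frac{q_j^2}{t(t-1)}+\sum_{i\ne j}(x_i-\bar x)^2$ holds, and Lemma~\ref{lemma1} simply drops the last, nonnegative, term. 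First I would keep it: feeding the refined inequality $\sum_i x_i^2\ge \frac{q_j^2}{t(t-1)}+\sum_{i\ne j}(x_i-\bar x)^2$ into the proof of Proposition~\ref{bound} leaves that bound intact except for the extra subtracted summand $\sum_{j=1}^t r_{d,j}^{-1}\sum_{i\ne j}(x_i-\bar x)^2$. The entire problem reduces to bounding this summand below by $(t-2a_d)\bigl(\frac{t-1}{4n}-\frac{2a_d}{nt}\bigr)$.

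The mechanism for that lower bound is integrality, combined with the standing hypothesis $n=(2h+1)(t-1)/2$, which makes $n/(t-1)=h+\tfrac12$ a half-integer. Consider first the clean case in which $d$ is uniform on subjects and equireplicate, so every $n_{d,iu}=1$, every $r_{d,j}=n$, and the shifts $\frac1t\sum_u n_{d,iu}n_{d,ju}$ all equal $n/t$. Then $x_i-\bar x=s_{d,ij}-\frac{n}{t-1}$, and since each $s_{d,ij}$ is an integer while $\frac{n}{t-1}=h+\tfrac12$, every one of the $t-1$ off-diagonal terms satisfies $(x_i-\bar x)^2\ge\tfrac14$. Hence $\sum_{i\ne j}(x_i-\bar x)^2\ge\frac{t-1}{4}$, and dividing by $r_{d,j}=n$ gives the clean contribution $\frac{t-1}{4n}$ for each of the $t$ treatments, i.e.\ exactly the leading term $t\cdot\frac{t-1}{4n}$ obtained by setting $a_d=0$ in the target.

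Next I would account for the departures from this clean case, which are exactly what $a_d=\sum_{i,u}\max\{n_{d,iu}-1,0\}$ measures. Each excess incidence both perturbs the shifts $\frac1t\sum_u n_{d,iu}n_{d,ju}$ away from $n/t$ and moves a replication $r_{d,j}$ away from~$n$, so it can destroy the half-integer argument for only a controlled number of treatments. The intended accounting is that, since $a_d\le(t-1)/2$, at least $t-2a_d$ treatments still deliver the full $\frac{t-1}{4n}$, while the erosion of the variance and the inflation of the denominators on the remaining treatments together account for the negative correction $\frac{2a_d}{nt}$ per surviving treatment. Summing, $\sum_j r_{d,j}^{-1}\sum_{i\ne j}(x_i-\bar x)^2\ge(t-2a_d)\bigl(\frac{t-1}{4n}-\frac{2a_d}{nt}\bigr)$; subtracting this from the bound $n\bigl(t-1-\frac1{t-1}\bigr)-\frac{2a_d}{t}$ of Proposition~\ref{bound} yields the claim.

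The hard part is precisely the generalization carried out in the third step. Away from the clean case the centres $\frac1t\sum_u n_{d,iu}n_{d,ju}+\bar x$ are neither integers nor constant in~$i$, so $\sum_{i\ne j}(x_i-\bar x)^2$ is no longer a sum of squared distances of integers from a single half-integer, and individual terms can be small. The genuine difficulty is therefore to separate the integer data $s_{d,ij}$ from the fractional, $i$-dependent shifts, to prove that the balanced half-integer configuration is still the worst case, and to quantify how each unit of $a_d$ can simultaneously reduce the guaranteed variance and distort the weights $r_{d,j}^{-1}$. Checking that all of these corrections combine with the correct sign is exactly where $a_d\le(t-1)/2$ is used, since it keeps the factor $t-2a_d$ positive and (for $t\ge4$) the per-treatment correction $\frac{t-1}{4n}-\frac{2a_d}{nt}$ nonnegative.
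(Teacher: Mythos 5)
Your reduction is sound: retaining the variance term that Lemma~\ref{lemma1} discards does convert Proposition~\ref{bound} into the statement that it suffices to prove
\[
\sum_{j=1}^t \frac{1}{r_{d,j}}\sum_{i\ne j}\left(x_i-\bar x\right)^2 \;\ge\; (t-2a_d)\left(\frac{t-1}{4n}-\frac{2a_d}{nt}\right),
\]
and your clean-case computation (all $n_{d,iu}=1$, so that $x_i-\bar x=s_{d,ij}-n/(t-1)$ with $n/(t-1)$ a half-integer, forcing each square to be at least $1/4$) is correct; half-integrality plus integrality of the $s_{d,ij}$ is indeed the mechanism the paper exploits. The problem is your third step, which you yourself flag as ``the hard part'': it is left entirely as an assertion (``the intended accounting is that\ldots''). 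As you correctly observe, once some $n_{d,iu}>1$ the centres become fractional and $i$-dependent, individual terms $(x_i-\bar x)^2$ can be arbitrarily small, and no per-term argument survives. Nothing in your write-up establishes that at least $t-2a_d$ treatments each contribute $\frac{t-1}{4n}-\frac{2a_d}{nt}$, nor where that specific correction comes from; restating the target inequality is not a proof of it. This missing step is not a routine verification --- it is the entire content of the paper's proof.

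The paper closes this gap by aggregation rather than term-by-term bounds. Fix a treatment $j$ with $n_{d,ju}=1$ for all $u$ (there are at least $t-2a_d$ such treatments when $a_d\le(t-1)/2$), sort $s_{d,1j}\ge\cdots\ge s_{d,t-1,j}$, and split into the top and bottom $(t-1)/2$ values. Writing $n=(\lambda-\tfrac12)(t-1)$, either the $(t-1)/2$-th largest value is at most $\lambda-1$, so the bottom half sums to at most $\frac{t-1}{2}(\lambda-1)=\frac n2-\frac{t-1}{4}$ and hence the top half to at least $\frac n2+\frac{t-1}{4}$; or it is at least $\lambda$, and the top half is at least $\frac{t-1}{2}\lambda=\frac n2+\frac{t-1}{4}$ directly. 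The fractional shifts are then controlled \emph{in aggregate}: the definition of $a_d$ gives $\sum_{i\le(t-1)/2}\sum_u n_{d,iu}\le\frac{t-1}{2}n+a_d$, whence $c:=\sum_{i\le(t-1)/2}\bigl(s_{d,ij}-\frac1t\sum_u n_{d,iu}n_{d,ju}\bigr)\ge\frac n{2t}+\frac{t-1}{4}-\frac{a_d}{t}\ge\frac n{2t}$. Applying Lemma~\ref{lemma1} separately to the top group (sum $c$) and the bottom group (sum $n/t-c$) yields a lower bound that is a quadratic in $c$, increasing for $c\ge n/(2t)$; substituting the lower bound on $c$ produces the per-treatment gain $\frac{t-1}{4}-\frac{2a_d}{t}$, which after division by $r_{d,j}=n$ is exactly $\frac{t-1}{4n}-\frac{2a_d}{nt}$. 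This sorted-split argument, with the $\lambda$ versus $\lambda-1$ dichotomy and the aggregate control of the shifts by $a_d$, is the idea your proposal lacks; without it (or a substitute for it) the proposition is not established.
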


\begin{proof}
There are at most $2a_d$ of the $n_{d,iu}$ not equal to $1$.
Since $a_d \le (t-1)/2$, we conclude that there must be at least $t-2a_d$ treatments $j$ such that 
$n_{d,ju}=1$ for $1\le u\le n$. Define $\mathcal J_d^*$ as the set of all such treatments.

Assume without loss of generality that treatment $t$ is in $\mathcal J_d^*$. Then $r_{d,t}=n$.
Since we consider the circular neighbour structure, $\sum_{i=1}^t s_{d,it}=r_{d,t}=n$.
Without loss of generality we can assume that the treatments are labelled in such a way that
$s_{d,1t}\ge s_{d,2t} \ge \cdots \ge s_{d,t-1,t}$.

If $s_{d,(t-1)/2,t} \le \lambda -1$, then $s_{d,it}\le \lambda-1$ for all $i>(t-1)/2$. Consequently,
\[
\sum_{i=(t+1)/2}^{t-1} s_{d,it} \le \frac{t-1}{2} (\lambda-1) = \frac{n}{2}-\frac{t-1}{4}.
\]
This, however, implies that
\[
\sum_{i=1}^{(t-1)/2} s_{d,it} \ge \frac{n}{2}+\frac{t-1}{4}.
\]
If, however, $s_{d,(t-1)/2,t} \ge \lambda$, then $s_{d,it} \ge \lambda$ for all $i \le (t-1)/2$ and
\[
\sum_{i=1}^{(t-1)/2} s_{d,it} \ge \frac{t-1}{2} \lambda = \frac{n}{2}+\frac{t-1}{4},
\]
again.

Now define
\[
c=\sum_{i=1}^{(t-1)/2} (s_{d,it}-\frac{1}{t} \sum_{u=1}^n n_{d,iu} n_{d,tu}).
\]
Because $n_{d,tu}=1$ for $1\leq u\leq n$, we conclude that
\[
c=\sum_{i=1}^{(t-1)/2} (s_{d,it}-\frac{1}{t} \sum_{u=1}^n n_{d,iu}).
\]
Note that $\sum_{i=1}^{(t-1)/2} \sum_{u=1}^n n_{d,iu} \le \frac{t-1}{2} n+a_d$ by the definition of $a_d$. We therefore get
\begin{eqnarray*}
c \ge \frac{n}{2}+\frac{t-1}{4}-\frac{1}{t} \sum_{i=1}^{(t-1)/2}\sum_{u=1}^n n_{d,iu}
& \ge & \frac{n}{2}+\frac{t-1}{4}-\frac{1}{t} \left(\frac{t-1}{2} n+a_d \right)
\\ & = &\frac{n}{2t}+\frac{t-1}{4}-\frac{a_d}{t}.
\end{eqnarray*}
It follows that $c\ge n/(2t)$ because $a_d \le (t-1)/2$. Furthermore,
\begin{eqnarray*}
0&=&\displaystyle{\sum_{i=1}^t (s_{d,it}-\frac{1}{t} \sum_{u=1}^n n_{d,iu} n_{d,tu})} \\
&=&\displaystyle{-\frac{1}{t} \sum_{u=1}^n n_{d,tu}^2 +\sum_{i=1}^{(t-1)/2} (s_{d,it}-\frac{1}{t} \sum_{u=1}^n n_{d,iu} n_{d,tu})}\\
& &\displaystyle{+ \sum_{i=(t+1)/2}^{t-1} (s_{d,it}-\frac{1}{t} \sum_{u=1}^n n_{d,iu} n_{d,tu})}\\
&=&\displaystyle{-\frac{1}{t} n+c+\sum_{i=(t+1)/2}^{t-1} (s_{d,it}-\frac{1}{t} \sum_{u=1}^n n_{d,iu} n_{d,tu})}
\end{eqnarray*}
and therefore
\[
\sum_{i=(t+1)/2}^{t-1} (s_{d,it}-\frac{1}{t} \sum_{u=1}^n n_{d,iu} n_{d,tu}) = \frac{n}{t}-c.
\]
Applying Lemma~\ref{lemma1} we conclude that
\begin{eqnarray*}
\sum_{i=1}^t (s_{d,it}-\frac{1}{t} \sum_{u=1}^n n_{d,iu} n_{d,tu} )^2  
& \ge &
\frac{n^2}{t^2} +\frac{2c^2}{t-1} +\frac{2(n/t-c)^2}{t-1}
\\ & = & 
\frac{t+1}{t-1}  \frac{n^2}{t^2} +\frac{4}{t-1} (c^2-\frac{nc}t).
\end{eqnarray*}
Since $c \ge \frac{n}{2t}$, this bound is increasing in $c$. 
Inserting $c \ge n/(2t)+(t-1)/4-a_d/t$, we get
\begin{eqnarray*}
\sum_{i=1}^t (s_{d,it}-\frac{1}{t} \sum_{u=1}^n n_{d,iu} n_{d,tu} )^2 
& \ge &
\frac{n^2}{t(t-1)} +\frac{t-1}4+\frac{4a_d^2}{(t-1) t^2}-\frac{2 a_d}t 
\\ & \ge &
\frac{n^2}{t(t-1)} +\frac{t-1}4 -2 \frac{a_d}t  .
\end{eqnarray*}
For any treatment $j$ which does not have all $n_{d,ju}=1$, we use the bound
from Proposition~\ref{pr_tail}. Combining these two, we get
\begin{eqnarray*}
\tr \mx C_d^{(g)} 
& \le & 
nt-\frac{1}{t} (nt+2a_d )-\sum_{j \in \mathcal J_d^*} \frac{r_{d,j}}{t(t-1)} - 
\sum_{j \in \mathcal J_d^*} \left(\frac{n}{t(t-1)} +\frac{t-1}{4n}- 2 \frac{a_d}{nt}\right)
\\ & = & 
nt-\frac{1}{t} (nt+2a_d )-\sum_{j=1}^t \frac{r_{d,j}}{t(t-1)} - |\mathcal J_d^* |\left(\frac{t-1}{4n}-2 \frac{a_d}{nt}\right),
\end{eqnarray*}
where we have used the fact that $r_{d,j}=n$ for all $j \in \mathcal J_d^*$, and where $|\mathcal J_d^* |$  is the number of elements
of $\mathcal J_d^*$. Due to the restriction that $a_d \le (t-1)/2$, we observe that
$\frac{t-1}{4n}-2 \frac{a_d}{nt} \ge \frac{t-1}{4n} (1-4/t)>0$. Since $|\mathcal J_d^* | \ge t-2a_d$, it follows that
\[
\tr \mx C_d^{(g)} \le nt-\frac{1}{t} (nt+2a_d )-\frac{n}{t-1} -(t-2a_d)\left(\frac{t-1}{4n}-\frac{2 a_d}{nt}\right),
\]
which implies the desired inequality.
\end{proof}

Now we can prove the following theorem.

\begin{thm}
\label{main}
Assume that $t \ge 5$ and that $n \ge t(t-1)/2$. We also assume that 
$t$~is odd and that $n$~is an odd multiple of $(t-1)/2$.
If $d^*$~is a uniform CWBD in $\Lambda_{t,n,t}$ then $d^*$~is universally optimal for the estimation of direct as well as carry-over effects over the collection of 
designs in $\Lambda_{t,n,t}$ under model~(\ref{model}).
If $d^*$~is a CWBD in $\Lambda_{t,n,t}$ which is uniform on subjects then $d^*$~is universally optimal for the estimation of direct as well as carry-over effects over the collection of designs in $\Lambda_{t,n,t}$ under model~(\ref{m_block}).
\end{thm}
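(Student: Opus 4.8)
The plan is to verify the two conditions of Kiefer's criterion (Proposition~1 of \cite{Kiefer}) for $d^*$ over the class $\Lambda_{t,n,t}$: that $\mx C_{d^*}$ is completely symmetric and that it has maximal trace. By Proposition~\ref{direct_carryover} it suffices to treat direct effects, since for a CWBD universal optimality for direct effects is equivalent to optimality for carry-over effects. I would handle both assertions of the theorem simultaneously: for model~(\ref{model}) the design $d^*$ is uniform, and for model~(\ref{m_block}) it is uniform on subjects, and in each case the cross-terms involving $\mx P$ and/or $\mx U$ vanish by the orthogonality remarks preceding Proposition~\ref{pr_ineq}. Hence in both cases $\mx C_{d^*}=\mx T'_{d^*}\omega^\perp(\mx F_{d^*})\mx T_{d^*}$.

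First I would establish complete symmetry and compute the trace of $\mx C_{d^*}$. Since $d^*$ is uniform on subjects, $\mx T'_{d^*}\mx T_{d^*}=\mx F'_{d^*}\mx F_{d^*}=n\mx I_t$ and $\mx T'_{d^*}\mx F_{d^*}=\mathbf{S}_{d^*}$, so that
\[
\mx C_{d^*}=n\mx I_t-\tfrac1n\mathbf{S}_{d^*}\mathbf{S}_{d^*}'.
\]
Because $d^*$ is a CWBD, $\mathbf{S}_{d^*}\mathbf{S}_{d^*}'$ is completely symmetric, and therefore so is $\mx C_{d^*}$. Using $\mathbf{S}_{d^*}\mathbf{S}_{d^*}'=\phi\mx I_t+\xi\mx J_t$ recalled before Proposition~\ref{direct_carryover}, together with $\lambda=\lceil n/(t-1)\rceil$ and $n=(2h+1)(t-1)/2$, a direct evaluation of $\tr\mx C_{d^*}=nt-\tfrac1n\tr(\mathbf{S}_{d^*}\mathbf{S}_{d^*}')$ gives
\[
\tr\mx C_{d^*}=n\Bigl(t-1-\tfrac1{t-1}\Bigr)-\frac{t(t-1)}{4n}.
\]
This is exactly the right-hand side of Proposition~\ref{sharp_bound} evaluated at $a_d=0$, so $d^*$ attains that sharper bound.

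Next I would show $\tr\mx C_d\le\tr\mx C_{d^*}$ for every competitor $d\in\Lambda_{t,n,t}$, splitting on the size of $a_d$. For $a_d\le(t-1)/2$ I would apply Proposition~\ref{sharp_bound} and regard its right-hand side as a function of $a_d$; subtracting its value at $a_d=0$ and factoring out $a_d$ reduces the required inequality to $t(t+3)/2-4a_d\le 2n$, which follows from $t\ge5$ (so that $t(t+3)/2\le t(t-1)$) and $n\ge t(t-1)/2$. For $a_d>(t-1)/2$, since $t$ is odd and $a_d$ is an integer we have $a_d\ge(t+1)/2$, so Proposition~\ref{bound} yields $\tr\mx C_d\le n(t-1-\tfrac1{t-1})-(t+1)/t$; comparing with the displayed value of $\tr\mx C_{d^*}$ reduces to $t^2(t-1)/(4(t+1))\le n$, again a consequence of $n\ge t(t-1)/2$. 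In both cases $\tr\mx C_d\le\tr\mx C_{d^*}$.

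Combining the two steps, $\mx C_{d^*}$ is completely symmetric and has maximal trace over $\Lambda_{t,n,t}$, so Kiefer's criterion gives universal optimality of $d^*$ for direct effects, and Proposition~\ref{direct_carryover} transfers this to carry-over effects, under model~(\ref{model}) in the uniform case and under model~(\ref{m_block}) in the uniform-on-subjects case. The main obstacle is the case-analysis on $a_d$: the coarse bound of Proposition~\ref{bound} is \emph{not} attained by $d^*$, because the left-neighbouring matrix of a CWBD is not as flat as the minimiser underlying Proposition~\ref{pr_tail}, so one cannot argue optimality from that bound alone and must use the refined Proposition~\ref{sharp_bound} on the small-$a_d$ range while reserving the coarse bound for large $a_d$. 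It is exactly in making these two estimates meet at or below $\tr\mx C_{d^*}$ that the hypotheses $t\ge5$ and $n\ge t(t-1)/2$ are consumed.
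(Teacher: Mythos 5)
Your proposal is correct and takes essentially the same approach as the paper: both use Proposition~\ref{sharp_bound} on the range $a_d \le (t-1)/2$ (showing its right-hand side is maximal at $a_d=0$, where it equals $\tr \mx C_{d^*}^{(g)}$) and Proposition~\ref{bound} to rule out competitors with $a_d > (t-1)/2$, with the hypotheses $t\ge 5$ and $n\ge t(t-1)/2$ consumed in exactly the same places. Your only departures are cosmetic: you verify the value of $\tr \mx C_{d^*}^{(g)}$ explicitly via $\phi$ and $\xi$, and you compare $f(a_d)$ with $f(0)$ by direct factorization where the paper differentiates $f$.
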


\begin{proof}
If the design $d$ has $a_d=0$, we get from Proposition~\ref{sharp_bound} that
\[
\tr \mx C_d^{(g)} \le n(t-1-1/(t-1))-\frac{t(t-1)}{4n},
\]
which is the trace of the information matrix of the 
CWBD~$d^*$.
Considering the simple bound derived in Proposition~\ref{bound}, we see that any design $d\in \Lambda_{t,n,t}$
can only perform better than $d^*$ if
\[
\frac{t(t-1)}{4n} \ge \frac{2a_d}{t}.
\]
Since we restrict to the case $n \ge t(t-1)/2$, the left-hand side is less than or equal to $1/2$.
If, however, $a_d>(t-1)/2$, then the right-hand side is greater than $(t-1)/t>1/2$.

Therefore, we only have to consider designs with $a_d \le (t-1)/2$   and the bound in Proposition~\ref{sharp_bound} applies.
Taking the derivative of
\[
f(a)=n(t-1-1/(t-1))-\frac{2a}t-(t-2a)\left(\frac{t-1}{4n}-\frac{2a}{nt}\right)
\]
with respect to $a$, we get
\begin{eqnarray*}
f'(a)&=&\displaystyle{-\frac{2}t+ 2\left(\frac{t-1}{4n}-\frac{2a}{nt}\right)-(t-2a)\left(-\frac{2}{nt}\right)}\\
&=&\displaystyle{-\frac{2}t+\frac{2(t-1)}{4n}+\frac{2t}{nt}-\frac{4a}{nt}-\frac{4a}{nt}}\\[.3cm]
&=&\displaystyle{\frac{-8n+2t(t-1)+8t-32a}{4nt}}\\
&\le & \displaystyle{\frac{-4t(t-1)+2t(t-1)+8t}{4nt}=\frac{-(t-1)+4}{2n}} \le 0.
\end{eqnarray*}
This, however, implies that the bound from Proposition~\ref{sharp_bound} is largest for $a_d=0$,
and for any design $d\in \Lambda_{t,n,t}$ we have
$\tr \mx C_d^{(g)} \le \tr \mx C_{d^*}^{(g)}$, 
for $g=2,3$.
\end{proof}

\section{Constructions}\label{constr}

In this section we suppose that $d$~is a CWBD in $\mathcal{D}_{t,n,t}$ which is not a CBD.
For each type of CWBD, we give constructions for a  suitable matrix~$\mathbf{A}$ and then search for a design~$d$ with $\mathbf{A}_d = \mathbf{A}$.

\subsection{Designs of Type I}
\label{sec:I}

For a design of Type~I, we have $t\equiv 3 \bmod 4$ and $k=(t-1)/2$.  
We need a $t \times t$ matrix $\mathbf{A}$ which has zero entries on the diagonal, 
$k$~entries equal to~$1$ in each row and column, and all other entries zero; 
it must also satisfy 
(a)~$\mathbf{A} + \mathbf{A}' = \mathbf{J}_t - \mathbf{I}_t$ and
(b)~$\mathbf{A}'\mathbf{A} = \phi  \mathbf{I}_t + \xi  \mathbf{J}_t$ with 
$\phi = (t+1)/4$ and $\xi  = (t-3)/4$.
Our strategy is first to find a matrix~$\mathbf{A}$ satisfying these conditions;
secondly to construct a design~$d$ which is a CWBD, is uniform on the subjects and has 
$\mathbf{A}_d = \mathbf{A}$; thirdly to construct a uniform design~$d$ which is a CWBD and has $\mathbf{A}_d = \mathbf{A}$.

The matrix~$\mathbf{A}$ can be regarded as the adjacency matrix of a 
directed graph~$\Gamma$ on $t$~vertices: there is an arc from vertex~$i$ to vertex~$j$ 
if and only if $\mathbf{A}_{ij}=1$.  This directed graph is called a \textit{doubly regular tournament} precisely when the matrix $\mathbf{A}$ satsifies the foregoing conditions: see
\cite{Reid}.
For a design which is a CWBD, is uniform on subjects and has $\lambda=1$, we need a decomposition of a doubly regular tournament~$\Gamma$ into Hamiltonian cycles.

One construction of doubly regular tournaments uses finite fields.
If $t$ is a power of an odd prime then there is a finite field $\gf(t)$ of $t$~elements.
If $t$~is prime then $\gf(t)$~is the same as~$\mathbb{Z}_t$, 
which is the ring of integers modulo~$t$. 
  Let $\mathcal{S}$ be the set of non-zero squares in $\gf(t)$, and
$\mathcal{N}$ the set of non-squares.  If $t\equiv 3 \bmod 4$ then $-1\in \mathcal{N}$;  
in this case, if we label the vertices of $\Gamma$ by the elements of $\gf(t)$ 
and define the adjacency matrix $\mathbf{A}$ by putting 
$\mathbf{A}_{ij}=1$ if and only if $j-i \in \mathcal{S}$, 
then $\Gamma$~is a doubly regular tournament: see \cite{Lidl}.
By reversing all the edges of~$\Gamma$, we obtain another doubly regular tournament, which can be made directly by using $\mathcal{N}$ in place of $\mathcal{S}$.

If $t$~is itself prime, then there is an obvious Hamiltonian decomposition of~$\Gamma$: 
the circular sequences have the form $(0,s,2s,\ldots, (t-1)s)$ for $s$~in~$\mathcal{S}$.


\begin{cons}
\label{con:Ius}
Suppose that $t\equiv 3 \bmod 4$ and $t$~is prime with $t>3$. Put $n=(t-1)/2$.  
Label the $t$~treatments and the $t$~periods by the elements of $\mathbb{Z}_t$, 
and the $n$~subjects by the elements of~$\mathcal{S}$.  
Define the design~$d$ by $d(j,s) = js$ for $j$~in~$\mathbb{Z}_t$  and
$s$~in~$\mathcal{S}$.  Then $d$~is a CWBD which is uniform on the subjects with $\lambda=1$.
\end{cons}

\begin{eg}
\label{eg:Ius7}
When $t=7$ we have $\mathcal{S} = \{1,2,4\}$.  We obtain the design in 
Figure~\ref{fig:Ius}(a), where the entries are integers modulo~$7$.
(In every figure, the rows denote periods and the columns denote subjects.)
\end{eg}

\begin{eg}
\label{eg:Ius11}
When $t=11$ we have $\mathcal{S} = \{1,3,4,5,9\}$.  This gives the design in
Figure~\ref{fig:Ius}(b), where the entries are integers modulo~$11$.
\end{eg}

\begin{figure}[htbp]
\[
\begin{array}{c@{\qquad}c@{\qquad}c}
\begin{array}[b]{ccc}
0 & 0 & 0 \\
1 & 2 & 4\\
2 & 4 & 1 \\
3 & 6 & 5\\
4 & 1 & 2\\
5 & 3 & 6\\
6 & 5 & 3
\end{array}
&
\begin{array}[b]{ccccc}
0 & 0 & 0 & 0 & 0\\
1 & 3 & 4 & 5 & 9\\
2 & 6 & 8 & 10 & 7\\
3 & 9 & 1 & 4 & 5\\
4 & 1 & 5 &9 & 3\\
5 & 4 & 9 & 3 & 1\\
6 & 7 & 2 & 8 & 10\\
7 & 10 & 6 & 2 & 8\\
8 & 2 & 10 & 7 & 6\\
9 & 5 & 3 & 1 & 4\\
10 & 8 & 7 & 6 & 2
\end{array}
&
\begin{array}[b]{ccccccc}
\infty & \infty & \infty & \infty & \infty & \infty & \infty\\
0 & 1 & 2 & 3 & 4 & 5 & 6\\
2' & 3' & 4' & 5' & 6' & 0' & 1'\\
3 & 4 & 5 & 6 & 0 & 1 & 2\\
1 & 2 & 3 & 4 & 5 & 6 & 0\\
5' & 6' & 0' & 1' & 2' & 3' & 4'\\
6' & 0' & 1' & 2' & 3' & 4' & 5'\\
1' & 2' & 3' & 4' & 5' & 6' & 0'\\
5 & 6 & 0 & 1 & 2 & 3 & 4\\
4 & 5 & 6 & 0 & 1 & 2 & 3\\
4' & 5' & 6' & 0' & 1' & 2' & 3'\\
6 & 0 & 1 & 2 & 3 & 4 & 5\\
2 & 3 & 4  & 5 & 6 & 0 & 1\\
3' & 4' & 5' & 6' & 0' & 1' & 6'\\
0' & 1' & 2' & 3' & 4' & 5' & 6'
\end{array}
\\
\mbox{(a)} & \mbox{(b)} & \mbox{(c)}
\end{array}
\]
\caption{Three CWBDs for $t$ treatments on $n$ subjects in $t$ periods which are uniform on the subjects: (a) $t=7$ and $n=3$; (b) $t=11$ and $n=5$; (c) $t=15$ and $n=7$}
\label{fig:Ius}
\end{figure}

For $n>1$, Construction~\ref{con:Ius} deals with $t=7$, $11$, $19$, $23$ and $31$ among
values of~$t$ below~$35$.

However, suitable matrices $\mathbf{A}$ also exist for many other values of~$t$.
\cite{Reid} showed that the $(t+1) \times (t+1)$ matrix
\[
\left[
\begin{array}{cc}
1 & \mathbf{1}_t'\\
\mathbf{1}_t & \mathbf{J}_t-2\mathbf{A}
\end{array}
\right]
\]
is a skew-Hadamard matrix if and only if $\mathbf{A}$ is the adjacency matrix of a doubly regular tournament.  Skew-Hadamard matrices of order $t+1$ are conjectured to exist whenever 
$t+1$ is divisible by~$4$. It is known that if $t<187$ and $t+1$ is divisible by $4$ then there is a skew-Hadamard matrix of order~$t+1$:
see \cite{Craigen}.

\cite{Reid} give the following doubling construction.  If 
$\mathbf{A}_1$ is the adjacency matrix of a doubly regular tournament~$\Gamma_1$ 
on $t$~vertices and
\begin{equation}
\mathbf{A}_2 =\left[
\begin{array}{ccc}
\mathbf{A}_1' & \mathbf{0}_t & \mathbf{A}_1 + \mathbf{I}_t\\
\mathbf{1}_t' & 0 & \mathbf{0}'_t\\
\mathbf{A}_1 & \mathbf{1}_t & \mathbf{A}_1
\end{array}
\right]
\label{eq:double}
\end{equation}
then $\mathbf{A}_2$ is the adjacency matrix of a doubly regular tournament~$\Gamma_2$ 
on $2t+1$ vertices.

\begin{eg}
\label{eg:Ius15}
Let $t=15$. Take $\Gamma_1$ to be the doubly regular tournament used in Example~\ref{eg:Ius7}.
The doubling construction~(\ref{eq:double}) gives the following matrix as the adjacency matrix of a doubly regular tournament~$\Gamma_2$ on $15$~vertices.
\begin{equation}
\left[
\begin{array}{ccccccccccccccc}
0 & 0 & 0 & 1 & 0 & 1 & 1 & 0 & 1 & 1 & 1 & 0 & 1 & 0 & 0\\
1 & 0 & 0 & 0 & 1 & 0 & 1 & 0 & 0 & 1 & 1 & 1 & 0 & 1 & 0\\
1 & 1 & 0 & 0 & 0 & 1 & 0 & 0 & 0 & 0 & 1 & 1 & 1 & 0 & 1\\
0 & 1 & 1 & 0 & 0 & 0 & 1 & 0 & 1 & 0 & 0 & 1 & 1 & 1 & 0\\
1 & 0 & 1 & 1 & 0 & 0 & 0 & 0 & 0 & 1 & 0 & 0 & 1 & 1 & 1\\
0 & 1 & 0 & 1 & 1 & 0 & 0 & 0 & 1 & 0 & 1 & 0 & 0 & 1 & 1\\
0 & 0 & 1 & 0 & 1 & 1 & 0 & 0 & 1 & 1 & 0 & 1 & 0 & 0 & 1\\
1 & 1 & 1 & 1 & 1 & 1 & 1 & 0 & 0 & 0 & 0 & 0 & 0 & 0 & 0\\
0 & 1 & 1 & 0 & 1 & 0 & 0 & 1 & 0 & 1 & 1 & 0 & 1 & 0 & 0\\
0 & 0 & 1 & 1 & 0 & 1 & 0 & 1 & 0 & 0 & 1 & 1 & 0 & 1 & 0\\
0 & 0 & 0 & 1 & 1 & 0 & 1 & 1 & 0 & 0 & 0 & 1 & 1 & 0 & 1\\
1 & 0 & 0 & 0 & 1 & 1 & 0 & 1 & 1 & 0 & 0 & 0 & 1 & 1 & 0\\
0 & 1 & 0 & 0 & 0 & 1 & 1 & 1 & 0 & 1 & 0 & 0 & 0 & 1 & 1\\
1 & 0 & 1 & 0 & 0 & 0 & 1 & 1 & 1 & 0 & 1 & 0 & 0 & 0 & 1\\
1 & 1 & 0 & 1 & 0 & 0 & 0 & 1 & 1 & 1 & 0 & 1 & 0 & 0 & 0  
\end{array}
\right]
\label{fifteen}
\end{equation}
Label the vertices, in order, $0$, $1$, $2$, $3$, $4$, $5$, $6$, $\infty$, $0'$, $1'$, $2'$, $3'$, $4'$, $5'$ and $6'$.  For $x$ in $\mathrm{GF}(7)$, there is an arc from $\infty$ to~$x$ and an arc from $x'$ to~$\infty$. 
For $x$ and $y$ in $\mathrm{GF}(7)$, there is an arc from $x$ to~$y$ if $y-x\in \mathcal{N}$; an arc from $x$ to~$y'$ if $x=y$ or $y-x\in\mathcal{S}$; 
an arc from $x'$ to~$y'$ if $y-x\in \mathcal{S}$; 
and an arc from $x'$ to~$y$ if $y-x\in\mathcal{S}$.

To find a CWBD which is uniform on subjects, we used \cite{GAP} to find a directed 
cycle~$\boldsymbol{\varphi}$ of length~$15$ starting $(\infty, 0, \ldots)$ in the doubly regular tournament~$\Gamma_2$ defined by the matrix (\ref{fifteen}) 
with the extra property that if $i$~is any non-zero element of $\mathrm{GF}(7)$ then the cycles $\boldsymbol{\varphi}$ and $\boldsymbol{\varphi}+i$ have no arc in common.  
Here we use the conventions that if $\boldsymbol{\varphi} = (\varphi_1, \ldots,\varphi_{15})$ then $\boldsymbol{\varphi} + i= (\varphi_1+i, \ldots,\varphi_{15}+i)$, where
$\infty +i=\infty$ and $x'+i = (x+i)'$ 
for $x$ and $i$ in $\mathrm{GF}(7)$.
\cite{GAP} found all such cycles. There are $120$, and they come in groups of three because 
if $\boldsymbol{\varphi}$ is such a cycle and $s\in\mathcal{S}$ then 
$s\boldsymbol{\varphi}$ is also such a cycle
(here the convention is that $s\boldsymbol{\varphi} = (s\varphi_1, \ldots,s\varphi_{15})$,
 where $s\times \infty = \infty$ and 
$s\times x' = (sx)'$ for $s$ and $x$ in $\mathrm{GF}(7)$).  
For each such cycle $\boldsymbol{\varphi}$,  the collection of cycles 
$\boldsymbol{\varphi}$, $\boldsymbol{\varphi}+1$, \ldots, $\boldsymbol{\varphi}+6$ gives a CWBD~$d$ for $15$ treatments on $7$~subjects in $15$ periods which is uniform on subjects
and for which $\mathbf{A}_d$ is the matrix~(\ref{fifteen}).
One of these is shown in Figure~\ref{fig:Ius}(c).

Alternatively, the function \texttt{FindHamiltonianCycles} in \textit{Mathematica}~9.0 can be used to find a Hamiltonian decomposition of~$\Gamma_2$. 
\end{eg}

For $t=3$, the direct use of a doubly regular tournament gives a design with $n=1$, which is disconnected. In order to obtain a CWBD uniform on periods in which direct and carry-over effects of treatments are estimable and which is not a CBD, we need to use one of the sequences $(0,1,2)$ and $(0,2,1)$ twice and the other one once.

\bigskip

If design~$d$ is made by Construction~\ref{con:Ius} then a uniform design~$d'$ 
with $t(t-1)/2$~subjects may be obtained by replacing the sequence $\boldsymbol{\varphi}$ 
for each subject  by the sequences $\boldsymbol{\varphi} +i$ for all $i$ in $\gf(t)$.  However, this has the effect that $\mathbf{S}_{d'} = t \mathbf{S}_d$, so $d'$ is not a CWBD, because the off-diagonal entries of $\mathbf{S}_{d'}$ are both $0$ and $t$.  Thus we need a different construction for uniform CWBDs.

Consider $\gf(t)$, where $t \equiv 3 \bmod 4$.  
If $x$ and $y$ are both in $\mathcal{S}$ or $\mathcal{N}$ then $xy\in\mathcal{S}$; 
if one is in $\mathcal{S}$ and the other in $\mathcal{N}$ then $xy \in\mathcal{N}$: 
see \cite{Lidl}.

If $\boldsymbol{\varphi}$ is any sequence $(\varphi_1, \ldots, \varphi_m)$ of 
elements of~$\gf(t)$, we denote by $\boldsymbol{\varphi}^\delta$ the sequence 
$(\varphi_2 - \varphi_1, \varphi_3-\varphi_2, \ldots, 
\varphi_m-\varphi_{m-1}, \varphi_1-\varphi_m)$
of successive circular differences in $\boldsymbol{\varphi}$. 
Further, let $f_0(\boldsymbol{\varphi}^\delta)$, 
$f_\mathcal{S}(\boldsymbol{\varphi}^\delta)$ and 
$f_\mathcal{N}(\boldsymbol{\varphi}^\delta)$ 
be the number of entries of $\boldsymbol{\varphi}^\delta$ which are 
in $\{0\}$, $\mathcal{S}$ and $\mathcal{N}$ respectively.

\begin{defn}
Let $\boldsymbol{\varphi}$ be a sequence of length $t$ whose entries are in~$\gf(t)$, 
where $t$ is a prime power congruent to $3$ modulo $4$. Then $\boldsymbol{\varphi}$
is \emph{\belle} if the entries in $\boldsymbol{\varphi}$ are all different and
$f_{\mathcal{S}}(\boldsymbol{\varphi}^\delta) = 
f_{\mathcal{N}}(\boldsymbol{\varphi}^\delta) \pm 1$. 
\end{defn}

If all of  the entries of $\boldsymbol{\varphi}$ are different then 
$f_0(\boldsymbol{\varphi}^\delta) = 0$.  Thus if $\boldsymbol{\varphi}$
has length~$t$ then it is \belle\ if and only if
$f_\mathcal{S}(\boldsymbol{\varphi}^\delta) \in \{k,k+1\}$.

\begin{cons}
\label{cons:Iusp}
Given a \belle\ sequence $\boldsymbol{\varphi} = (\varphi_1, \ldots, \varphi_t)$
of all the elements of $\gf(t)$, 
form the $t(t-1)/2$ sequences $s\boldsymbol{\varphi}+i$ for all $s$ in $\mathcal{S}$
and all $i$ in $\gf(t)$.
Create the design~$d$ by using each of these sequences for one subject.
\end{cons}

\begin{thm}
\label{thm:Pseq}
Suppose that $t \equiv 3 \bmod 4$ and $t$ is a prime power. 
If $\boldsymbol{\varphi}$ is \belle\ then the design~$d$ given by 
Construction~\ref{cons:Iusp} is a uniform CWBD.
\end{thm}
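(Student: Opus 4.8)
The plan is to verify that the design $d$ produced by Construction~\ref{cons:Iusp} satisfies all the defining conditions of a uniform CWBD, namely: uniformity on periods and on subjects, correct treatment replication, the counting condition on ordered pairs ($\lambda$ or $\lambda-1$ appearances), condition~(a) that $\mathbf{S}_d\mathbf{1}_t = \mathbf{S}_d'\mathbf{1}_t = n\mathbf{1}_t$, and condition~(b) that $\mathbf{S}_d\mathbf{S}_d'$ is completely symmetric. Since $t\equiv 3\bmod 4$ we are in the Type~I regime with $k=(t-1)/2$, so the target is $\mathbf{A}_d = \mathbf{A}$ where $\mathbf{A}_{ij}=1$ iff $j-i\in\mathcal{S}$, the Paley doubly regular tournament from Section~\ref{sec:I}. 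Thus the strategy is to show $\mathbf{A}_d$ equals this $\mathbf{A}$; then the completely-symmetric condition~(b) follows automatically from the doubly-regular property already recorded, and Theorem~\ref{exist}/the Type~I analysis does the rest.

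First I would establish uniformity on subjects: each sequence $s\boldsymbol{\varphi}+i$ is a permutation of $\gf(t)$ because $\boldsymbol{\varphi}$ has all distinct entries (being \belle), $s\neq 0$, and translation by~$i$ is a bijection; hence every treatment appears exactly once per subject, giving $n_{d,iu}=1$ and $a_d=0$. Next, uniformity on periods: in a fixed period~$j$, the treatments appearing across the $t(t-1)/2$ subjects are $\{s\varphi_j+i : s\in\mathcal{S},\ i\in\gf(t)\}$; for each fixed $s$, as $i$ ranges over $\gf(t)$ we get all of $\gf(t)$ once, and there are $|\mathcal{S}|=(t-1)/2$ choices of~$s$, so each treatment occurs exactly $(t-1)/2$ times in period~$j$. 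This also gives $r_{d,i}=t(t-1)/2 = n$, so every treatment has replication~$n$, which is half of condition~(a).

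The central computation is the left-neighbouring matrix $\mathbf{S}_d$. The entry $s_{d,\ell m}$ counts occurrences of treatment~$\ell$ immediately preceded by treatment~$m$. Within a single sequence $s\boldsymbol{\varphi}+i$, consecutive treatments in periods $p$ and $p+1$ are $s\varphi_p+i$ and $s\varphi_{p+1}+i$, so the ordered pair $(m,\ell) = (s\varphi_p + i,\ s\varphi_{p+1}+i)$ contributes, and the key invariant is the circular difference $\ell - m = s(\varphi_{p+1}-\varphi_p) = s\cdot(\boldsymbol{\varphi}^\delta)_p$. I would therefore reorganize the count of $s_{d,\ell m}$ over all subjects by the value of $\ell-m$: for a fixed nonzero difference $\delta_0$, the number of (subject, period) pairs realizing $\ell - m = \delta_0$ with $\ell$ fixed depends only on whether $\delta_0\in\mathcal{S}$ or $\delta_0\in\mathcal{N}$. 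Using the multiplicative structure recorded before the definition ($s\cdot\mathcal{S}\subseteq\mathcal{S}$, $s\cdot\mathcal{N}\subseteq\mathcal{N}$ for $s\in\mathcal{S}$), each circular difference $(\boldsymbol{\varphi}^\delta)_p\in\mathcal{S}$ gets scattered by multiplication by $s\in\mathcal{S}$ over all of $\mathcal{S}$, and similarly $\mathcal{N}\to\mathcal{N}$; meanwhile translation by~$i$ lets $\ell$ range over all treatments. Counting carefully, one finds that every difference lying in $\mathcal{S}$ is realized the same number of times, say $\lambda$, and every difference in $\mathcal{N}$ is realized $\lambda-1$ times (or vice versa), with the two constants differing by exactly~$1$ precisely because $\boldsymbol{\varphi}$ is \belle, i.e.\ $f_{\mathcal{S}}(\boldsymbol{\varphi}^\delta) = f_{\mathcal{N}}(\boldsymbol{\varphi}^\delta)\pm 1$. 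Since differences in $\mathcal{S}$ (or $\mathcal{N}$) are exactly the off-diagonal $1$-entries of the Paley tournament, this shows $\mathbf{A}_d = \mathbf{A}$, giving the pair-count condition ($\lambda$ or $\lambda-1$ times), no self-neighbours ($s_{d,\ell\ell}=0$ since no difference is~$0$), and the remaining half of condition~(a) by the row/column regularity of~$\mathbf{A}$.

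Finally, with $\mathbf{A}_d=\mathbf{A}$ the doubly-regular-tournament identity $\mathbf{A}'\mathbf{A} = \phi\mathbf{I}_t+\xi\mathbf{J}_t$ and $\mathbf{A}+\mathbf{A}'=\mathbf{J}_t-\mathbf{I}_t$ from Section~\ref{sec:I} make condition~(\ref{eq:AAstuff}) hold, so $\mathbf{S}_d\mathbf{S}_d'$ is completely symmetric, verifying~(b). Together with uniformity on both periods and subjects, this establishes that $d$ is a uniform CWBD. I expect the main obstacle to be the bookkeeping in the $\mathbf{S}_d$ computation: one must track simultaneously the three group actions in play (multiplication by $s\in\mathcal{S}$ on differences, translation by $i\in\gf(t)$ on the base point, and the circular shift over periods $p$) and confirm that they combine so that the tally of each off-diagonal entry depends only on the quadratic character of the difference and differs by exactly one between $\mathcal{S}$ and $\mathcal{N}$; pinning down the precise constant $\lambda=\lceil n/(t-1)\rceil$ and checking the $\pm 1$ matches the \belle\ condition is where care is needed.
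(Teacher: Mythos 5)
Your proposal is correct and follows essentially the same route as the paper: uniformity on subjects and periods via the permutation/translation structure of the sequences $s\boldsymbol{\varphi}+i$, then computation of $\mathbf{S}_d$ using the multiplicativity of quadratic characters, with the \belle\ condition forcing the off-diagonal entries into $\{k,k+1\}$ so that $\mathbf{A}_d$ is a Paley-type doubly regular tournament. The only cosmetic difference is that you organize the pair count by fixing the difference $\ell-m$ and summing over (subject, period) incidences, whereas the paper fixes a period transition and shows each ordered pair with difference of the appropriate character occurs exactly once across subjects; these are dual bookkeepings of the same count.
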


\begin{proof}
The entries in $\boldsymbol{\varphi}$ are all different, so the entries in
$s\boldsymbol{\varphi}+i$ are all different for each value of $s$ and $i$.  
Therefore each treatment occurs once on each subject, so $d$~is uniform on subjects and no treatment is preceded by itself.

For each fixed $s$ in $\mathcal{S}$, every element of $\gf(t)$ occurs once in each position among the $t$~sequences of the form $s\boldsymbol{\varphi} +i$, as $i$~varies in $\gf(t)$.  Therefore $d$ is uniform.

Consider period~$j$. Put $\varphi_j^\delta = v$.  First, suppose that $v\in\mathcal{S}$.
Let $i \in \gf(t)$ and $s\in\mathcal{S}$.  
Then $s/v\in \mathcal{S}$, and treatment $i$ occurs in period~$j$ of the sequence
$(s/v)\boldsymbol{\varphi} +w$, where $w = i-(s/v)\varphi_j$.  
The treatment in period $j+1$ of this sequence is
$(s/v)\varphi_{j+1} + w = (s/v)\varphi_{j+1} + i - (s/v)\varphi_j 
= i + (s/v)(\varphi_{j+1}-\varphi_j) = i+s$.
Thus every ordered pair of treatments of the the form $(i,i+s)$, 
for $i$ in $\gf(t)$ and $s$ in $ \mathcal{S}$, occurs exactly once in 
periods $j$ and $j+1$.

Similarly, if  $v\in \mathcal{N}$ and $q\in \mathcal{N}$ then
$q/v\in\mathcal{S}$, and every ordered pair of treatments of the form
$(i,i+q)$, for $i$ in $ \gf(t)$ and $q$ in $\mathcal{N}$, occurs exactly once in 
periods $j$ and $j+1$.

Thus if $w-i\in\mathcal{S}$ then $(i,w)$ occurs 
$f_\mathcal{{S}}(\boldsymbol{\varphi}^\delta)$ times in the design, 
while if $w-i\in\mathcal{N}$ then $(i,w)$ occurs 
$f_\mathcal{{N}}(\boldsymbol{\varphi}^\delta)$ times.  
If $\boldsymbol{\varphi}$ is \belle\ then the off-diagonal entries of $\mathbf{S}_d$ 
are in $\{k,k+1\}$ and $\mathbf{A}_d$ is the adjacency matrix of
one of the doubly regular tournaments defined by $\mathcal{S}$ or $\mathcal{N}$.
Hence $d$~is a CWBD.
\end{proof}

\begin{eg}
\label{eg:Iusp7}
Let $t=7$ and $\boldsymbol{\varphi} = (3,1,0,2,6,4,5)$, where the entries are the integers modulo~$7$.  Then $\boldsymbol{\varphi}^\delta = (5,6,2,4,5,1,5)$.  Here 
$\mathcal{S} = \{1,2,4\}$ and $\mathcal{N} = \{3,5,6\}$, and so  
$f_{\mathcal{S}}(\boldsymbol{\varphi}^\delta)=3$ and 
$f_{\mathcal{N}}(\boldsymbol{\varphi}^\delta) = 4$. Thus $\boldsymbol{\varphi}$ is \belle.
Hence Construction~\ref{cons:Iusp} gives the uniform CWBD 
for $7$~treatments 
on $21$ subjects in $7$~periods in Figure~\ref{fig:Iusp7}.
\end{eg}

\begin{figure}[htbp]
\[
\begin{array}{ccccccccccccccccccccc}
3 & 4 & 5 & 6 & 0 & 1 & 2 & 6 & 0 & 1 & 2 & 3 & 4 & 5 & 5 & 6 & 0 & 1 & 2 & 3 & 4\\
1 & 2 & 3 & 4 & 5 & 6 & 0 & 2 & 3 & 4 & 5 & 6 & 0 & 1 & 4 & 5 & 6 & 0 & 1 & 2 & 3\\
0 & 1 & 2 & 3 & 4 & 5 & 6 & 0 & 1 & 2 & 3 & 4 & 5 & 6 & 0 & 1 & 2 & 3 & 4 & 5 & 6\\
2 & 3 & 4 & 5 & 6 & 0 & 1 & 4 & 5 & 6 & 0 & 1 & 2 & 3 & 1 & 2 & 3 & 4 & 5 & 6 & 0\\
6 & 0 & 1 & 2 & 3 & 4 & 5 & 5 & 6 & 0 & 1 & 2 & 3 & 4 & 3 & 4 & 5 & 6 & 0 & 1 & 2\\
4 & 5 & 6 & 0 & 1 & 2 & 3 & 1 & 2 & 3 & 4 & 5 & 6 & 0 & 2 & 3 & 4 & 5 & 6 & 0 & 1\\
5 & 6 & 0 & 1 & 2 & 3 & 4 & 3 & 4 & 5 & 6 & 0 & 1 & 2 & 6 & 0 & 1 & 2 & 3 & 4 & 5\\
\end{array}
\]
\caption{A uniform CWBD for $7$ treatments on $21$ subjects in $7$ periods}
\label{fig:Iusp7}
\end{figure}

Now let $x$~be any primitive element of $\gf(t)$; that is, $x$ is a generator of the cyclic group $(\gf(t) \setminus\{0\},\times)$.  The even powers of~$x$ constitute $\mathcal{S}$, while the odd powers constitute~$\mathcal{N}$.
Let $\boldsymbol{\psi}$~be the sequence $(1,x,x^2, \ldots, x^{t-2})$.  Then 
$\boldsymbol{\psi}$~contains each non-zero element of $\gf(t)$ exactly once.  The 
entries in $\boldsymbol{\psi}^\delta$ are $x-1$, $x(x-1)$, \ldots, $x^{t-3}(x-1)$ and 
$1-x^{t-2} = x^{t-2}(x-1) = 1-x^{-1}$, which are again all the  non-zero elements of 
$\gf(t)$ exactly once, and so $f_{\mathcal{S}}(\boldsymbol{\psi}^\delta) = 
f_{\mathcal{N}}(\boldsymbol{\psi}^\delta) = k$.

\begin{thm}
\label{thm:n}
Let $t$ be a prime power congruent to $3$ modulo $4$ with $t>3$.
If  $x$~is a primitive element of~$\gf(t)$  
and $\boldsymbol{\varphi}$ is obtained from $\boldsymbol{\psi}$ by replacing 
$(1,x)$ with $(x,1,0)$, then $\boldsymbol{\varphi}$~is \belle.
\end{thm}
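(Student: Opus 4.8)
The plan is to verify the two requirements in the definition of a \belle\ sequence directly. First I would write $\boldsymbol{\varphi}$ out explicitly: starting from $\boldsymbol{\psi}=(1,x,x^2,\ldots,x^{t-2})$ and replacing the initial pair $(1,x)$ by $(x,1,0)$ gives $\boldsymbol{\varphi}=(x,1,0,x^2,x^3,\ldots,x^{t-2})$, a sequence of length~$t$. Since $\boldsymbol{\psi}$ lists every non-zero element of $\gf(t)$ exactly once, $\boldsymbol{\varphi}$ lists every element of $\gf(t)$ exactly once; in particular its entries are distinct, which settles the first requirement and shows $f_0(\boldsymbol{\varphi}^\delta)=0$, so $f_{\mathcal{S}}(\boldsymbol{\varphi}^\delta)+f_{\mathcal{N}}(\boldsymbol{\varphi}^\delta)=t$. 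Because $t$ is odd, it then suffices to prove that these two counts differ by exactly~$1$. Writing $\chi$ for the quadratic character of $\gf(t)$ (so $\chi$ is $+1$ on $\mathcal{S}$, $-1$ on $\mathcal{N}$ and $0$ at~$0$), this amounts to showing $\sum\chi(\boldsymbol{\varphi}^\delta)=\pm1$, the sum running over the $t$ circular differences.

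Second, I would compare $\boldsymbol{\varphi}^\delta$ with $\boldsymbol{\psi}^\delta$. The circular differences of $\boldsymbol{\psi}$ are $e_j=x^{j-1}(x-1)$ for $1\le j\le t-1$, and as $x^{j-1}$ ranges over all non-zero elements these run through every non-zero element once; hence $\sum\chi(\boldsymbol{\psi}^\delta)=0$. A short index check shows that the consecutive differences of $\boldsymbol{\varphi}$ coming from the tail $x^2,x^3,\ldots,x^{t-2}$ coincide exactly with $e_3,e_4,\ldots,e_{t-2}$, so $\boldsymbol{\varphi}^\delta$ and $\boldsymbol{\psi}^\delta$ agree on all but a handful of terms. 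The terms of $\boldsymbol{\psi}^\delta$ that are lost are $x-1$, $x(x-1)$ and $x^{-1}(x-1)$; the terms of $\boldsymbol{\varphi}^\delta$ that are new are $1-x$, $-1$, $x^2$ and the wrap-around difference $x-x^{t-2}=x-x^{-1}$. Consequently, using $\sum\chi(\boldsymbol{\psi}^\delta)=0$,
\begin{align*}
\sum\chi(\boldsymbol{\varphi}^\delta)
&=\big[\chi(1-x)+\chi(-1)+\chi(x^2)+\chi(x-x^{-1})\big]\\
&\quad-\big[\chi(x-1)+\chi(x(x-1))+\chi(x^{-1}(x-1))\big].
\end{align*}

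Finally I would evaluate these few characters by multiplicativity, using $\chi(-1)=-1$ (because $t\equiv3\bmod4$) and $\chi(x)=-1$ (because a primitive element is a non-square). Setting $\alpha=\chi(x-1)$ and $\beta=\chi(x+1)$, the lost terms contribute $\alpha+(-\alpha)+(-\alpha)=-\alpha$, while the new terms contribute $-\alpha-1+1+\chi\big(x^{-1}(x-1)(x+1)\big)=-\alpha-\alpha\beta$, so that $\sum\chi(\boldsymbol{\varphi}^\delta)=-\alpha\beta=-\chi(x^2-1)$. Because $t>3$ the primitive element $x$ differs from $1$ and from $-1$, hence $x^2-1\ne0$ and $\chi(x^2-1)\in\{+1,-1\}$. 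Therefore $f_{\mathcal{S}}(\boldsymbol{\varphi}^\delta)-f_{\mathcal{N}}(\boldsymbol{\varphi}^\delta)=-\chi(x^2-1)=\pm1$, so $\boldsymbol{\varphi}$ is \belle. The only point needing genuine care is the single wrap-around difference $x-x^{-1}$, the one place where $\chi(x+1)$ enters, together with the use of the hypothesis $t>3$ to guarantee $x^2-1\ne0$; everything else is bookkeeping that cancels cleanly against $\boldsymbol{\psi}^\delta$.
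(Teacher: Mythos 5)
Your proof is correct and follows essentially the same route as the paper's: both compare $\boldsymbol{\varphi}^\delta$ with $\boldsymbol{\psi}^\delta$, identify the same three removed differences ($x-1$, $x(x-1)$, $1-x^{-1}$) and four added ones ($1-x$, $-1$, $x^2$, $x-x^{-1}$), and use $t\equiv 3\bmod 4$ together with $\chi(x)=-1$ for a primitive element to see that everything cancels except the wrap-around term, with $t>3$ ensuring no new difference vanishes. The only distinction is presentational: you run the bookkeeping as a quadratic-character sum, arriving at $f_{\mathcal{S}}(\boldsymbol{\varphi}^\delta)-f_{\mathcal{N}}(\boldsymbol{\varphi}^\delta)=-\chi(x^2-1)=\chi(x-x^{-1})=\pm 1$, whereas the paper tracks memberships in $\mathcal{S}$ and $\mathcal{N}$ directly and concludes $f_{\mathcal{S}}(\boldsymbol{\varphi}^\delta)\in\{k,k+1\}$ according to whether $x-x^{-1}$ is a square.
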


\begin{proof}
If $t>3$, the substitution removes $1-x^{-1}$, $x-1$ and $x^2-x$ from 
$\boldsymbol{\psi}^\delta$, and replaces them in $\boldsymbol{\varphi}^\delta$ by 
$x-x^{-1}$, $1-x$, $-1$ and $x^2$.  None of these is zero if $t>3$.
Now, $-1\in\mathcal{N}$ and $x^2\in\mathcal{S}$. Since $x\in\mathcal{N}$,
one of $x-1$ and $x(x-1)$ is in $\mathcal{S}$ and the other is in $\mathcal{N}$.
Since $1-x^{-1} = (-x^{-1})(1-x)$ and $-x^{-1}\in\mathcal{S}$, 
the entries $1-x^{-1}$ and $1-x$ are either both in $\mathcal{S}$ or both in $\mathcal{N}$.
Thus $f_{\mathcal{S}}(\boldsymbol{\varphi}^\delta) = 
f_{\mathcal{S}}(\boldsymbol{\psi}^\delta) + 1 = k+1$ if $x-x^{-1}\in\mathcal{S}$, 
while $f_{\mathcal{S}}(\boldsymbol{\varphi}^\delta) = 
f_{\mathcal{S}}(\boldsymbol{\psi}^\delta)  = k$ if $x-x^{-1} \in \mathcal{N}$.
\end{proof}

If $t=7$ then $3$ is a primitive element.  The construction in 
Theorem~\ref{thm:n} gives the \belle\ sequence $\boldsymbol{\varphi}$ in 
Example~\ref{eg:Iusp7}.

Theorems~\ref{thm:Pseq}--\ref{thm:n} show that there is a uniform CWBD 
for $t$~treatments on $t(t-1)/2$ subjects in $t$~periods 
whenever $t$~is a prime power congruent to $3$ modulo $4$ and $t>3$.
This covers $t=7$, $11$, $19$, $23$, $27$ and $31$ among values of~$t$ below~$35$.

\subsection{Designs of Type II}

For a design of Type~II, we have $n=k$, where $2\leq k \leq t-2$. Also, 
condition~(\ref{eq:fmcond}) shows that $t-1$ divides $k(k-1)$.  
We need a $t \times t$ matrix~$\mathbf{A}$
which has $k$~entries equal to~$1$ in each row and column, and all other entries zero, in such a way that $\mathbf{A}'\mathbf{A} = \phi  \mathbf{I}_t + \xi \mathbf{J}_t$ with
$\phi  = k(t-k)/(t-1)$ and $\xi = k(k-1)/(t-1)$.
The matrix~$\mathbf{A}$ can be regarded as the incidence matrix of a symmetric 
balanced incomplete-block design (BIBD)~$\Delta$: treatment~$i$ is in block $j$ if and only if $\mathbf{A}_{ij}=1$.  Given such a design~$\Delta$, Hall's Marriage Theorem \citep{Bailey,Cameron,HallP} shows that the treatments and blocks can be labelled in such a way that the diagonal entries of~$\mathbf{A}$ are all zero.  Now our strategy is to find a known BIBD~$\Delta$ of the appropriate size, label its blocks in such a way that the diagonal entries of $\mathbf{A}$ are all zero, and then try to find a CWBD which is uniform on subjects for which $\lambda=1$ and $\mathbf{S}_d' = \mathbf{A}_d = \mathbf{A}$.

The condition that $t-1$ divides $k(k-1)$ is not sufficient to guarantee the existence of a
BIBD.  The Bruck--Ryser--Chowla Theorem shows that some pairs $(t,k)$ have no BIBD:
see \cite{Cameron}.  For $t<35$, the following pairs are excluded by this theorem:
$(22,7)$, $(22,15)$, $(29,8)$, $(29,21)$, $(34,12)$ and $(34,22)$.

Some BIBDs can be constructed from difference sets: see \cite{HallM}.  
If $(\mathcal{G},+)$ is a finite Abelian group and $\mathcal{P}\subset\mathcal{G}$ then $\mathcal{P}$ is called a \emph{difference set} if every non-zero element of $\mathcal{G}$ occurs equally often among the differences 
$x-y$ for $x$ and $y$ in $\mathcal{P}$ with $x\ne y$.  When $\mathcal{G}$ is the additive group of $\gf(t)$ and $t\equiv 3 \bmod 4$ then $\mathcal{S}$ and $\mathcal{N}$ are both difference sets.  
If $\mathcal{P}$ is a difference set then so is its complement $\overline{\mathcal{P}}$, 
and so is the set $\mathcal{P}+i = \{x+i:i\in\mathcal{P}\}$, for each $i$ in $\mathcal{G}$.
If $i\notin\mathcal{P}$ then $\mathcal{P}-i$ is a difference set 
that does not contain~$0$. 
In particular, when $t$~is a prime power and $t\equiv 3 \bmod 4$ then 
$\overline{\mathcal{S}}-1$ is a difference set with $k=(t+1)/2$ which does not contain~$0$.

To obtain a BIBD $\Delta$ from the difference set~$\mathcal{P}$, label the treatments and blocks by the elements of~$\mathcal{G}$, and put $\mathbf{A}_{ij}=1$ if and only if 
$j-i \in \mathcal{P}$.  As above, we can assume that $0\notin\mathcal{P}$: then the diagonal entries of~$\mathbf{A}$ are all zero.

Difference sets give the following generalization of Construction~\ref{con:Ius}.

\begin{cons}
\label{cons:IIusds}
Suppose that $\mathcal{P}$ is a difference set of size~$k$ in~$\mathbb{Z}_t$, 
that $0\notin\mathcal{P}$ and that all elements of~$\mathcal{P}$ are coprime to~$t$.
Label the $t$~treatments and the $t$~periods by the elements of~$\mathbb{Z}_t$, and the
$k$~subjects by the elements of~$\mathcal{P}$. Define the design~$d$ by $d(i,j)=ij$ for 
$i$~in~$\mathbb{Z}_t$ and $j$~in~$\mathcal{P}$.  Then $d$~is a CWBD which is uniform on subjects with $\lambda=1$.
\end{cons}

\begin{eg}
\label{eg:IIusds7}
When $t=7$ and $k=4$ we have the difference set $\overline{\mathcal{S}}-1 = \{2,4,5,6\}$. Then Construction~\ref{cons:IIusds} gives the design in Figure~\ref{fig:IIusds}(a).
\end{eg}

\begin{figure}[htbp]
\[
\begin{array}{c@{\qquad}c@{\qquad}c}
\begin{array}[b]{cccc}
0 & 0 & 0 & 0\\
2 & 4 & 5 & 6\\
4 & 1 & 3 & 5\\
6 & 5 & 1 & 4\\
1 & 2 & 6 & 3\\
3 & 6 & 4 & 2\\
5 & 3 & 2 & 1
\end{array}
&
\begin{array}[b]{cccc}
0 & 0 & 0 & 0\\
1 & 2 & 5 & 7\\
2 & 4 & 10 & 1\\
3 & 6 & 2 & 8\\
4 & 8 & 7 & 2\\
5 & 10 & 12 & 9\\
6 & 12 & 4 & 3\\
7 & 1 & 9 & 10\\
8 & 3 & 1 & 4\\
9 & 5 & 6 & 11\\
10 & 7 & 11 & 5\\
11 & 9 & 3 & 12\\
12 & 11 & 8 & 6
\end{array}
&
\begin{array}[b]{ccccccccc}
0 & 0 & 0 & 0 & 0 & 0 & 0 & 0 & 0 \\
2 & 3 & 5 & 7 & 8 & 9 & 10 & 11 & 12\\
4 & 6 & 10 & 1 & 3 & 5 & 7 & 9 & 11\\
6 & 9 & 2 & 8 & 11 & 1 & 4 & 7 & 10\\
8 & 12 & 7 & 2 & 6 & 10 & 1 & 5 & 9\\
10 & 2 & 12 & 9 & 1 & 6 & 11 & 3 & 8\\
12 & 5 & 4 & 3 & 9 & 2 & 8 & 1 & 7\\
1 & 8 & 9 & 10 & 4 & 11 & 5 & 12 & 6\\
3 & 11 & 1 & 4 & 12 & 7 & 2 & 10 & 5\\
5 & 1 & 6 & 11 & 7 & 3 & 12 & 8 & 4\\
7 & 4 & 11 & 5 & 2 & 12 & 9 & 6 & 3\\
9 & 7 & 3 & 12 & 10 & 8 & 6 & 4 & 2\\
11 & 10 & 8 & 6 & 5 & 4 & 3 & 2 & 1
\end{array}
\\
\mbox{(a)} & \mbox{(b)} & \mbox{(c)}
\end{array}
\]
\caption{Three CWBDs for $t$ treatments on $n$ subjects in $t$ periods which are uniform on the subjects: (a) $t=7$ and $n=4$; (b) $t=13$ and $n=4$; (c) $t=13$ and $n=9$}
\label{fig:IIusds}
\end{figure}

Difference sets exist for many other values of $t$ and $k$ satisfying the necessary divisibility conditions:
see \cite{Baumert} and Table~2 of \cite{FM12}.

\begin{eg}
\label{eg:IIusds13}
When $t=13$, $\{1,2,5,7\}$ and $\{2,3,5,7,8,9,10,11,12\}$ are both difference sets in
$\mathbb{Z}_{13}$. Construction~\ref{cons:IIusds} gives the designs in 
Figures~\ref{fig:IIusds}(b) and~(c).
\end{eg}

\begin{eg}
\label{eg:IIusds31}
When $t=31$, $\{1,2,4,9,13,19\}$ is a difference set in $\mathbb{Z}_{31}$.  
Thus Construction~\ref{cons:IIusds} gives CWBDs uniform on subjects, one for $6$~subjects and one for $25$~subjects.
\end{eg}

A result of \cite{Mann} shows that there is no difference set of size $9$ or $16$ for 
$\mathbb{Z}_{25}$.  Theorems of \cite{Lander} rule out difference sets of size $k$ or $t-k$ for $\mathbb{Z}_t$ when $(t,k)$ is $(16,6)$, $(27,13)$ or $(31,10)$.
There is a difference set of size~$8$ for $\mathbb{Z}_{15}$, but its elements are not all coprime to~$15$, so Construction~\ref{cons:IIusds} cannot be used. The same problem occurs for $k=5$ and $k=16$ when $t=21$.

\bigskip
If $\mathbf{A}$~is symmetric then it can be regarded as the adjacency matrix of an
undirected graph~$\Omega$.  The condition that $\mathbf{A}'\mathbf{A}$ is completely symmetric means that every pair of distinct vertices has the same number of common neighbours. Such graphs were studied by \cite{Rudvalis}.  If such a graph~$\Omega$ has a Hamiltonian decomposition then using each cycle once in both directions gives a CWBD which is uniform on subjects.

\begin{eg}
\label{eg:IIsymm16a}
The smallest such graph is the square lattice graph $L_2(4)$, which has $16$~vertices and valency~$6$.  Every pair of distinct vertices have exactly two common neighbours.
The vertices form a $4\times 4$ grid.  There is an edge between $i$ and $j$ if $i \ne j$ but 
$i$ and $j$ are in the same row or $i$ and $j$ are in the same column.

Label the vertices row by row, so that the first row is $(1,2,3,4)$, and so on. Let $\pi$ be the following permutation of the vertices, which is an automorphism of~$L_2(4)$:
\[\pi = (2,3,4) (5,9,13) (6,11,16) (7,12,14) (8, 10, 15).\]
There is a Hamiltonian decomposition of~$L_2(4)$ which is invariant under~$\pi$.
Using each of these cycles in both directions gives the design in 
Figure~\ref{fig:IIsymm16}(a).
\end{eg}

\begin{figure}[htbp]
\[
\begin{array}{c@{\qquad}c}
\begin{array}{cccccc}
1 & 1 & 1 & 5 & 9 & 13\\
2 & 3 & 4 & 8 & 10 & 15\\
6 & 11 & 16 & 16 & 6 & 11\\
7 & 12 & 14 & 15 & 8 & 10\\
11 & 16 & 6 & 3 & 4 & 2\\
9 & 13 & 5 & 4 & 2 & 3\\
13 & 5 & 9 & 12 & 14 & 7\\
14 & 7 & 12 & 10 & 15 & 8\\
10 & 15 & 8 & 14 & 7 & 12\\
12 & 14 & 7 & 13 & 5 & 9\\
4 & 2 & 3 & 9 & 13 & 5\\
3 & 4 & 2 & 11 & 16 & 6\\
15 & 8 & 10 & 7 & 12 & 14\\
16 & 6 & 11 & 6 & 11 & 16\\
8 & 10 & 15 & 2 & 3 & 4\\
5 & 9 & 13 & 1 & 1 & 1 
\end{array}
&
\begin{array}{cccccccccc}
1 & 1 & 1 & 1 & 1 & 11 & 5 & 10 & 3 & 6\\
2 & 4 & 7 & 13 & 9 & 9 & 2 & 4 & 7 & 13\\
3 & 6 & 11 & 5 & 10 & 15 & 14& 12 & 8 & 16\\
4 & 7 & 13 & 9 & 2 & 13 & 9 & 2 & 4 & 7\\
7 & 13 & 9 & 2 & 4 & 14 & 12 & 8 & 16 & 15\\
5 & 10 & 3 & 6 & 11 & 16 & 15 & 14 & 12 & 8\\
6 & 11& 5 & 10 & 3  & 10 & 3 & 6 & 11 & 5\\
8 & 16 & 15 & 14 & 12 & 12 & 8 & 16 & 15 & 14\\
12 & 8 & 16 & 15 & 14 & 8 & 16 & 15 & 14 & 12\\
10 & 3 & 6 & 11 & 5 & 6 & 11 & 5 & 10 & 3\\
16 & 15 & 14 & 12 & 8 & 5 & 10 & 3 & 6 & 11\\
14 & 12 & 8 & 16 & 15 & 7 & 13 & 9 & 2 & 4\\
13 & 9 & 2 & 4 & 7 & 4 & 7 & 13 & 9 & 2\\
15 & 14 & 12 & 8 & 16 & 3 & 6 & 11 & 5 & 10\\
9 & 2 & 4 & 7 & 13 & 2 & 4 & 7 & 13 & 9\\
11 & 5 & 10 & 3 & 6 & 1 & 1 & 1 & 1 & 1 
\end{array}
\\
\mbox{(a)} & \mbox{(b)}
\end{array}
\]
\caption{Two CWBDs for $16$ treatments on $n$ subjects in $16$ periods which are uniform on the subjects: (a)  $n=6$; (b)  $n=10$}
\label{fig:IIsymm16}
\end{figure}

The Shrikhande graph is another graph with $16$ vertices, valency $6$ and the common-neighbour property: see \cite{Seidel}.  Using \cite{GAP},
we found that it 
has a very large number of Hamiltonian decompositions.  Each of these gives a CWBD that cannot be obtained from the one in Figure~\ref{fig:IIsymm16}(a) by renaming the treatments.

\begin{eg}
\label{eg:IIsymm16b}
The Clebsch graph $\Omega$ is another such graph with $16$ vertices: see \cite{Seidel}.  It has valency $10$, and every pair of distinct vertices have exactly $6$~common neighbours.  The vertices are the vectors of length~$5$ over $\gf(2)$ of even weight 
(equivalently, the treatments in the $2^{5-1}$ factorial design with defining contrast $ABCDE=I$); two vertices are joined if they differ in precisely two positions.  The permutation $\pi$ taking $(x_1,x_2,x_3,x_4,x_5)$ to $(x_2,x_3,x_4,x_5,x_1)$ is an automorphism of $\Omega$. 

Using \cite{GAP}, we found a very large number of Hamiltonian decompositions of $\Omega$ which are invariant under $\pi$ 
(as in Example~\ref{eg:IIsymm16a}, it is sufficient to find a single Hamiltonian cycle which has no edges in common with any of its images under powers of~$\pi$).  For any one of these decompositions, using each cycle in both directions gives the required CWBD. One is shown in Figure~\ref{fig:IIsymm16}(b), where vertex $(x_1,x_2,x_3,x_4,x_5)$ is identified as the integer $8x_1 + 4x_2 + 2x_3 + x_4 +1$. 
\end{eg}

For $t>16$, \cite{Rudvalis} shows that the smallest value of~$t$ for which there exists a graph with the common-neighbour property is $t=36$.

\subsection{Designs of Type III}
For a design of Type~III, we shall consider~$\mathbf{A}$ to be the adjacency matrix of a directed graph~$\Xi$.  Now $\lambda\ne 1$ and condition~(\ref{eq:AAstuff}) is satisfied.
However, neither $\mathbf{A}'\mathbf{A}$ nor $\mathbf{A} + \mathbf{A}'$ is completely symmetric, so at most one value of~$\lambda$ is possible for any given 
directed graph~$\Xi$.  
Again, we first look for such a matrix~$\mathbf{A}$ and then try to construct a CWBD $d$ for which $\mathbf{A}_d = \mathbf{A}$.
As in Section~\ref{sec:I}, we build larger matrices from smaller ones.

Let $\mathbf{A}_1$ be the adjacency matrix of a doubly regular 
tournament~$\Gamma$ on $r$~vertices, where $r=4q+3$.  Let $t=mr$, 
and put 
$\mathbf{A}_2= \mathbf{J}_m \otimes (\mathbf{I}_r + \mathbf{A}_1) - \mathbf{I}_t$.
Then $\mathbf{A}_2 + \mathbf{A}_2' = \mathbf{J}_m \otimes (\mathbf{J}_r + \mathbf{I}_r)
-2\mathbf{I}_t$ and $\mathbf{A}_2'\mathbf{A}_2 = 
(mq+m-1)\mathbf{J}_m\otimes (\mathbf{J}_r + \mathbf{I}_r) + \mathbf{I}_t$.
Thus $\mathbf{A}_2$ satisfies  condition~(\ref{eq:AAstuff}) with
$\lambda=m(q+1)$, $k= 2m(q+1) -1$ and $n=m^2(4q+3)(q+1) -m(3q+2)$.

\begin{eg}
\label{eg:III32}
When $q=0$ we may let $\mathbf{A}_1$ be the adjacency matrix of the doubly regular tournament defined by $\mathcal{S}$ in $\gf(3)$. Then
\[
\mathbf{A}_2 = 
\left[
\begin{array}{cccccc}
0 & 1 & 0 & 1 & 1 & 0\\
0 & 0 & 1 & 0 & 1 & 1\\
1 & 0 & 0 & 1 & 0 & 1\\
1 & 1 & 0 & 0 & 1 & 0\\
0 & 1 & 1 & 0 & 0 & 1\\
1 & 0 & 1 & 1 & 0 & 0
\end{array}
\right]
.
\]
If the treatments in Example~4.4 of \cite{FM12} are written in the order
$1$, $3$, $5$, $6$, $2$, $4$ then $\mathbf{A}_d = \mathbf{A}_2$. 
\end{eg}

This construction gives suitable matrices $\mathbf{A}$ for $(t,\lambda,k) = 
(3m,m,2m-1)$, $(7m,2m,4m-1)$, $(11m,3m,6m-1)$, \dots, but we have not so far found a way of constructing a corresponding CWBD directly from the smaller one.

\bigskip

\cite{Babai} give the following doubling construction for what they call 
an \emph{S-digraph}. Let $\mathbf{A}_1$ be the adjacency matrix of a doubly regular 
tournament~$\Gamma$ on $r$ vertices, where $r=4q+3$. Put 
\[
\mathbf{A}_2 = \left[
\begin{array}{cccc}
0 & \mathbf{1}_{r}' & 0 & \mathbf{0}'_{r}\\
\mathbf{0}_{r} & \mathbf{A}_1 & \mathbf{1}_{r} & \mathbf{A}'_1\\
0 & \mathbf{0}'_{r} & 0 & \mathbf{1}'_{r}\\
\mathbf{1}_{r} & \mathbf{A}_1' & \mathbf{0}_r & \mathbf{A}_1 
\end{array}
\right]
\]
and
$\mathbf{I}_{8q}^* = (\mathbf{J}_2 - \mathbf{I}_2) \otimes \mathbf{I}_{4q}$.
Then the S-digraph~$\Xi$ has adjacency matrix~$\mathbf{A}_2$.  Now,
$\mathbf{A}_2 + \mathbf{A}_2' = \mathbf{J}_{8q} - \mathbf{I}_{8q} - \mathbf{I}_{8q}^*$ and
$\mathbf{A}_2'\mathbf{A}_2 = (4q+3)\mathbf{I}_{8q} + (2q+1)(\mathbf{J}_{8q} 
- \mathbf{I}_{8q} - \mathbf{I}_{8q}^*)$.
Thus $\mathbf{A}_2$ satisfies condition~(\ref{eq:AAstuff})
with $t=8(q+1)$, $k=4q+3$, $\lambda=2(q+1)$ and $n=16q^2 +26q+10$.

\begin{eg}
\label{eq:IIIsdi}
Put $q=0$ and let $\mathbf{A}_1$ be the adjacency matrix of the doubly regular tournament defined by $\mathcal{S}$ in $\gf(3)$.  Then
\[
\mathbf{A}_2 = \left[
\begin{array}{cccccccc}
0 & 1 & 1 & 1 & 0 & 0 & 0 & 0\\
0 & 0 & 1 & 0 & 1 & 0 & 0 & 1\\
0 & 0 & 0 & 1 & 1 & 1 & 0 & 0\\
0 & 1 & 0 & 0 & 1 & 0 & 1 & 0\\
0 & 0 & 0 & 0 & 0 & 1 & 1 & 1\\
1 & 0 & 0 & 1 & 0 & 0 & 1 & 0\\
1 & 1 & 0 & 0 & 0 & 0 & 0 & 1\\
1 & 0 & 1 & 0 & 0 & 1 & 0 & 0 
\end{array}
\right].
\]
After relabelling of the treatments, this is the matrix $\mathbf{A}_d$ for the 
design~$d$ in Example~4.3 of \cite{FM12}.
\end{eg}

\vskip 3mm

\noindent ACKNOWLEDGMENTS
\vskip 3mm

This paper was started in the Isaac Newton Institute for Mathematical
Sciences in Cambridge, UK, during the 2011 programme on the Design and
Analysis of Experiments. This research was partially 
supported by the National Science Center Grant 
DEC-2011/01/B/ST1/01413
(K. Filipiak and A. Markiewicz) and by the Collaborative Research
Center Statistical modeling of nonlinear dynamic processes
(SFB 823, Teilprojekt C2) of the German Research Foundation (J. Kunert).
Part of the work was done while R. A. Bailey and P. J. Cameron held Hood Fellowships at the
University of Auckland in 2014.

\bigskip



\end{document}